\numberwithin{equation}{section}
\DeclareMathOperator*{\argmin}{arg\,min}
\newtheorem{theorem}{Theorem}[section]
\newtheorem{proposition}[theorem]{Proposition}
\theoremstyle{definition}
\newtheorem{definition}[theorem]{Definition}
\newtheorem{remark}[theorem]{Remark}
\newtheorem{example}{Example}
\begin{document}

\begin{Frontmatter}

\title[Article Title]{Algebraic Constraints and Algorithms for Common Lines in Cryo-EM}

\author*[1]{Tommi Muller}\email{tommi.muller@maths.ox.ac.uk}\orcid{0000-0003-4333-920X}
\author[2]{Adriana L. Duncan}\email{aduncan@math.utexas.edu}\orcid{0000-0001-5075-858X}
\author[3]{Eric J. Verbeke}\email{ev9102@princeton.edu}\orcid{https://orcid.org/0000-0001-9506-424X}
\author[2,4]{Joe Kileel}\email{jkileel@math.utexas.edu}\orcid{0000-0001-9926-9170}

\address[1]{\orgdiv{Mathematical Institute}, \orgname{University of Oxford}, \orgaddress{\city{Oxford}, \postcode{OX2 6GG},  \country{United Kingdom}}}

\address[2]{\orgdiv{Department of Mathematics}, \orgname{University of Texas at Austin}, \orgaddress{\city{Austin}, \postcode{78712}, \state{TX},  \country{United States}}}

\address[3]{\orgdiv{Program in Applied and Computational Mathematics}, \orgname{Princeton University}, \orgaddress{\city{Princeton}, \postcode{08540}, \state{NJ},  \country{United States}}}

\address[4]{\orgdiv{Oden Institute}, \orgname{University of Texas at Austin}, \orgaddress{\city{Austin}, \postcode{78712}, \state{TX},  \country{United States}}}

\keywords{cryo-electron microsopy, common lines,  discrete heterogeneity, group synchronization, low-rank matrix, ADMM}

\abstract{
We revisit the topic of common lines between projection images in single particle cryo-electron microscopy (cryo-EM). We derive a novel low-rank constraint on a certain $2n \times n$ matrix storing properly-scaled basis vectors for the common lines between $n$ projection images of one molecular conformation.  
Using this algebraic constraint and others, we give optimization algorithms to denoise common lines and recover the unknown 3D rotations associated to the images. 
As an application, we develop a clustering algorithm to partition a set of noisy images into homogeneous communities using common lines, in the case of discrete heterogeneity in cryo-EM.
We demonstrate the methods on synthetic and experimental datasets. 
}

\begin{policy}[Impact Statement]
Single particle cryo-electron microscopy is an imaging technique used to determine the 3D structure of biomolecules from noisy 2D projection images.  This paper revisits one of the first approaches to cryo-EM image processing, namely common lines between pairs of 2D class averages coming from the Fourier slice theorem.  
We present a novel mathematical approach for dealing with common lines: in contrast to some alternatives, it operates directly on the common lines themselves and avoids triplewise angular reconstitution completely. The paper then derives novel algebraic constraints on sets of consistent common lines, including a straightforward low-rank matrix condition.
The algebraic conditions are incorporated into optimization methods arising from the field of computer vision to produce new methods for computational tasks involving common lines.
In particular, we achieve improved accuracy in common line denoising and rotation recovery at low signal-to-noise ratios.
We also present a method to detect homogeneous communities of 2D class averages in the case of a cryo-EM dataset with multiple molecular conformations.
Altogether this work clarifies a classic topic in cryo-EM, and opens the door to applying common lines techniques on more challenging \nolinebreak datasets.

\end{policy}

\end{Frontmatter}

\section{Introduction}

Single particle cryo-electron microscopy (cryo-EM) is an imaging technique capable of recovering the high resolution 3D structure of molecules from many noisy tomographic projection images taken at unknown viewing angles. 
One of the first approaches for 3D reconstruction, known as angular reconstitution, is based on the \text{common line property} of projection images induced by the Fourier slice theorem~\cite{Vainshtein_determination_1986, VANHEEL1987111}. Due to the low signal-to-noise ratio (SNR) in cryo-EM data, detecting common lines is a difficult task~\cite{SINGER2010312}: even today when applied to 
denoised averages of images, referred to as 2D class averages.
Detecting common lines is subject to angular errors and incorrectly identified common lines. 
Although methods which seek to minimize global errors in the estimated viewing directions have increased the utility of common lines methods~\cite{shkolniskySinger2}, additional constraints on common lines are needed to improve their accuracy and robustness.

In this paper we propose a novel approach for dealing with common lines.  Specifically, we assemble the  estimated common lines for a dataset of $n$ images into a certain $2n \times n$ matrix, which stores properly-scaled basis vectors for the common lines (\Cref{prop:rank}). 
The matrix directly encodes common lines data, without requiring angular reconstitution on various subsets of images or needing voting procedures like some existing formulations \cite{shkolniskySinger1,SINGER2010312}.  As such, it yields a direct and more global approach than prior constructions for  common \nolinebreak lines.

As a main contribution, we derive \text{algebraic constraints} on the matrix of common lines, which must be satisfied in order for a set of common lines to be consistent with a single asymmetric molecular conformation.  The constraints include a straightforward low-rank condition on the matrix, as well as various sparse quadratic constraints.  
Importantly, the constraints enable new strategies for \text{computational tasks} involving common lines, in particular for: denoising common lines; estimating 3D rotations; and clustering heterogeneous image sets into homogeneous subsets.
We demonstrate this by adapting optimization algorithms from other domains to these tasks, using the algebraic constraints.
We remark that our constraints seem better suited for numerical optimization than the semialgebraic constraints found in prior work~\cite{dynerman2014semi}.

Notably, the clustering problem is a recent application of common lines \cite{verbeke_separating_2020}.  In more detail, the goal is to sort discretely heterogeneous image sets of multiple molecules into communities corresponding to homogeneous image subsets.
This application is motivated by the increasing complexity of cryo-EM datasets, where samples may not be purified and thus the number of distinct molecules contained in a dataset is more than one ~\cite{verbeke_classification_2018, yi_electron_2019, sae-lee_protein_2022}. 
Our algebraic constraints and optimization algorithms enable consistency checks of subsets of images, to test whether the subset corresponds to a single molecule.

As a mathematical guarantee, we prove that computing the correct scales in the homogeneous case admits an essentially unique global optimum, see \Cref{thm:row-column-scales}. 
We implement our algorithms and test them on simulated and real datasets in \Cref{sec:experiments}.
%\eric{we implement our algorithm and test on..}. 
The results demonstrate that our methods can be successful when applied to 2D class averages at noise levels comparable to experimental data, in both the homogeneous and discretely heterogeneous cases.  
We conclude with a discussion of potential future improvements.

\subsection*{Advantages.}
There are several advantages to our approach for dealing with common lines:
\begin{itemize}
\item The new formulation is directly in terms of the data, that is, in terms of the common lines themselves.
\item It involves multiple common lines simultaneously, and does not require triplewise angular reconstitution at all (in contrast to \cite{shkolniskySinger1} for instance), making our approach fully global and potentially more robust to noise than alternatives.
\item The algebraic constraints can be incorporated into  existing optimization algorithms that have seen success in computer vision applications \cite{scalingAmit}. 
\item The resulting algorithms outperform existing methods for denoising and rotation recovery on noisy simulated data, and perform comparably well for clustering heterogeneous image sets on real data, even though the optimization algorithms are off-the-shelf.
\end{itemize}

\section{Background}

First, we recall a standard simplified mathematical model for cryo-EM, in the homogeneous case of one molecular conformation.   
We assume there exists a 3D function $\varphi : \mathbb{R}^3 \rightarrow \mathbb{R}$ describing the electrostatic potential generated by the molecule. 
As data, we receive $n$ two-dimensional tomographic projection images, denoted $I_{R^{(i)}} : \mathbb{R}^2 \rightarrow \mathbb{R}$ for $i=1, \ldots, n$, where $R^{(i)} \in \operatorname{SO}(3)$ are 3D rotations associated with each image. 
The goal of single particle cryo-EM is to recover the underlying 3D structure $\varphi$ from the set of 2D tomographic projection images which are observed at unknown rotations. 
The images, in their idealized and noiseless form, have the following Fourier transforms due to the Fourier slice theorem:
\begin{equation}\label{eq:Ihat-first}
\widehat{I}_{R^{(i)}}(\hat{x}, \hat{y}) = (R^{(i)} \cdot \widehat{\varphi})(\hat{x}, \hat{y}, 0).
\end{equation}
Here $\widehat{\varphi}(\hat{x}, \hat{y}, \hat{z}) := \int_{\mathbb{R}^3 }\varphi(x,y,z) e^{\sqrt{-1} (x \hat{x} + y \hat{y} + z \hat{z})} dx dy dz$ denotes the Fourier transform of 
$\varphi$, and 
$R^{(i)} \cdot \widehat{\varphi}$ denotes the rotation of $\widehat{\varphi}$ by $R^{(i)}$.
Writing $R^{(i)} = \begin{pmatrix}  {\mathbf{r}_1^{(i)}} &   {\mathbf{r}_2^{(i)}} & {\mathbf{r}_3^{(i)}} \end{pmatrix}^{\!\top}$, equation~\eqref{eq:Ihat-first} reads
\begin{equation}\label{eq:I-hat}
\widehat{I}_{R^{(i)}}(\hat{x}, \hat{y}) \,\, = \,\, \widehat{\varphi}\left((R^{(i)})^{\top} ( \hat{x} \,\, \hat{y} \,\, 0 )^{\top}\right) \,\, = \,\, \widehat{\varphi}(\hat{x}\mathbf{r}_1^{(i)} + \hat{y} \mathbf{r}_2^{(i)}).
\end{equation}
Generically, for asymmetric molecules $\varphi$ and distinct rotations $R^{(i)}$ and $R^{(j)}$, there exist unique lines through the origin in the domain of the Fourier-transformed images $\widehat{I}_{R^{(i)}}$ and $\widehat{I}_{R^{(j)}}$, respectively $\ell_{ij} \subseteq \operatorname{domain}(\widehat{I}_{R^{(i)}}) = \mathbb{R}^2$ and $\ell_{ji} \subseteq \operatorname{domain}(\widehat{I}_{R^{(j)}}) = \mathbb{R}^2$, such that the restrictions
\begin{equation}\label{eq:restriction}
\widehat{I}_{R^{(i)}} \big{|}_{\ell_{ij}} \,\, = \,\, \widehat{I}_{R^{(j)}} \big{|}_{\ell_{ji}} 
\end{equation}
are equal as functions on $\mathbb{R}^2$. In cryo-EM, one says that $\ell_{ij}$ and $\ell_{ji}$ are the \textbf{\textit{common lines}} between the $i$th and $j$th image. 
In modest-noise settings, which are arrived at by working with 2D class averages instead of raw tomographic images \cite{frank2006three}, common lines can be estimated from real cryo-EM data. 
They give basic ways to do 3D reconstruction in cryo-EM;  for example, see the angular reconstitution technique of van Heel~\cite{VANHEEL1987111} or the works of Shkolnisky, Singer and their collaborators ~\cite{SINGER2010312, shkolniskySinger1, shkolniskySinger2} for example.

From equation~\eqref{eq:I-hat}, the common lines $\ell_{ij}$ and $\ell_{ji}$ may be found mathematically by expressing the single line in 3D space:
\begin{align} \label{eq:intersection}
\operatorname{span}(\mathbf{r}_1^{(i)}, \mathbf{r}_2^{(i)}) \cap \operatorname{span}(\mathbf{r}_1^{(j)}, \mathbf{r}_2^{(j)}) &= \operatorname{span}(\mathbf{r}_3^{(i)})^{\perp} \cap  \operatorname{span}(\mathbf{r}_3^{(j)})^{\perp}  = \operatorname{span}(\mathbf{r}_3^{(i)} \times \mathbf{r}_3^{(j)}) \subseteq \operatorname{dom}(\widehat{\varphi}) = \mathbb{R}^3
\end{align}
\medskip
% \medskip
\noindent in the coordinate system of the $i$th and $j$th image respectively.  Here, $\times$ denotes the cross product in $\mathbb{R}^3$. 
Combining~\eqref{eq:I-hat} and~\eqref{eq:intersection}, equation~\eqref{eq:restriction} may be written as:
\begin{align*}
& \widehat{I}_{R^{(i)}}(\lambda \hat{x}_{ij} , \lambda \hat{y}_{ij}) \,\, = \,\, \widehat{I}_{R^{(j)}}(\lambda \hat{x}_{ji} , \lambda \hat{y}_{ji}) \quad \text{for all } \lambda \in \mathbb{R}, \nonumber \\[0.5em]
& \text{where } \, \hat{x}_{ij} := \langle \mathbf{r}_1^{(i)}, \mathbf{r}_3^{(i)} \times \mathbf{r}_3^{(j)} \rangle, \,\, \hat{y}_{ij} := \langle \mathbf{r}_2^{(i)}, \mathbf{r}_3^{(i)} \times \mathbf{r}_3^{(j)} \rangle, \\[0.5em]
& \text{and } \, \hat{x}_{ji} := -\langle \mathbf{r}_1^{(j)}, \mathbf{r}_3^{(i)} \times \mathbf{r}_3^{(j)} \rangle, \,\, \hat{y}_{ji} := -\langle \mathbf{r}_2^{(j)}, \mathbf{r}_3^{(i)} \times \mathbf{r}_3^{(j)} \rangle \nonumber
\end{align*}
where $\langle \cdot , \cdot \rangle$ denotes the standard inner product in $\mathbb{R}^3$. Common lines can therefore be encoded via:

\medskip

\begin{definition} \label{def:representatives}
Vectors $\mathbf{a}_{ij}, \mathbf{a}_{ji} \in \mathbb{R}^2$ are called \textbf{\textit{representatives}} for the common lines $\ell_{ij}$ and $\ell_{ji}$ if there exists a nonzero scalar $\lambda_{ij} = \lambda_{ji} \in \mathbb{R}$ such that 
\begin{align}{\label{eq:representatives}}
    \mathbf{a}_{ij} = \lambda_{ij}\begin{pmatrix} \langle \mathbf{r}_1^{(i)}, \mathbf{r}_3^{(i)} \times \mathbf{r}_3^{(j)} \rangle \\ \langle \mathbf{r}_2^{(i)}, \mathbf{r}_3^{(i)} \times \mathbf{r}_3^{(j)} \rangle 
    \end{pmatrix}, &\quad \mathbf{a}_{ji} = \lambda_{ji}\begin{pmatrix} -\langle \mathbf{r}_1^{(j)}, \mathbf{r}_3^{(i)} \times \mathbf{r}_3^{(j)} \rangle \\ -\langle \mathbf{r}_2^{(j)}, \mathbf{r}_3^{(i)} \times \mathbf{r}_3^{(j)} \rangle
    \end{pmatrix}.
\end{align}
\end{definition}
Equivalently, representatives $\mathbf{a}_{ij}$ and $\mathbf{a}_{ji}$ are choices of basis vectors for the common lines $\ell_{ij}$ and $\ell_{ji}$ which satisfy $\widehat{I}_{R^{(i)}}(\lambda \mathbf{a}_{ij}) = \widehat{I}_{R^{(j)}}(\lambda \mathbf{a}_{ji})$ for all $\lambda \in \mathbb{R}$.
Representatives for common lines can be estimated from 2D class averages in practice. 

\medskip

We stress that, although quite standard, the model \eqref{eq:Ihat-first} is greatly simplified.  
It neglects the effects of contrast transfer functions (CTFs), imperfect centering in particle picking, and blurring in class averaging.  
Further, we have restricted attention to the case of asymmetric molecules, as otherwise common lines are only unique up to the action of the relevant symmetry group (e.g., see \cite{geva2023common}).

\section{Constraints on sets of common lines}

\subsection{The common lines matrix}\label{sec:common-matrix}

We introduce an object to keep track of all common lines in a dataset. 
It is the main object in this paper.

\medskip

\begin{definition} \label{def:common-lines-matr}
A \textit{\textbf{common lines matrix}} associated to rotations $R^{(1)}, \ldots, R^{(n)} \in \operatorname{SO}(3)$ is a matrix $A \in \mathbb{R}^{2n \times n}$, which when regarded as an $n \times n$ block matrices with $2 \times 1$ blocks $\mathbf{a}_{ij} \in \mathbb{R}^2$ is such that $\mathbf{a}_{ij}, \mathbf{a}_{ji}$ are representatives for the common lines $\ell_{ij}, \ell_{ji}$ if $i$ and $j$ are distinct and $\mathbf{a}_{ii} = 0$ otherwise. 
If the scalars $\lambda_{ij}$ in~\eqref{eq:representatives} are all equal to 1, then we call $A$ the \textit{\textbf{pure common lines matrix}}.
\end{definition}

\medskip

Thus a common lines matrix $A$ associated to $R^{(1)}, \ldots, R^{(n)}$ is uniquely defined up to $\binom{n}{2}$ nonzero real scalars $\lambda_{ij}$ ($i<j$).  
In real data settings where the 2D class averages are sufficiently denoised, we can estimate $A$ from data by estimating representatives for the common lines. 

We now present constraints which a \textit{pure} common lines matrix must satisfy.  Firstly, there is the following low-rank condition.  All of our computational methods take advantage of this.

\medskip

\begin{theorem}{\label{prop:rank}}
Let $A \in \mathbb{R}^{2n \times n}$ be the pure common lines matrix associated to Zariski-generic rotations $R^{(1)}, \ldots, R^{(n)} \in \operatorname{SO}(3)$ where $n \geq 3$. Then $\operatorname{rank}(A) = 3$.
\end{theorem}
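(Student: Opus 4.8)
The plan is to exhibit an explicit rank-$\le 3$ factorization $A = BV$ with $B \in \mathbb{R}^{2n \times 3}$ and $V \in \mathbb{R}^{3 \times n}$, which gives $\operatorname{rank}(A) \le 3$ for all rotations, and then to show that for generic rotations both factors have full rank $3$, which forces equality.

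First I would rewrite the blocks of $A$. Applying the scalar triple product identity $\langle \mathbf{u}, \mathbf{v} \times \mathbf{w} \rangle = \langle \mathbf{u} \times \mathbf{v}, \mathbf{w} \rangle$ together with the right-handed frame relations $\mathbf{r}_3^{(i)} \times \mathbf{r}_1^{(i)} = \mathbf{r}_2^{(i)}$ and $\mathbf{r}_2^{(i)} \times \mathbf{r}_3^{(i)} = \mathbf{r}_1^{(i)}$ (valid since the rows of $R^{(i)} \in \operatorname{SO}(3)$ form a right-handed orthonormal basis), the entries appearing in \eqref{eq:representatives} with $\lambda_{ij} = 1$ become
\[
\langle \mathbf{r}_1^{(i)}, \mathbf{r}_3^{(i)} \times \mathbf{r}_3^{(j)} \rangle = -\langle \mathbf{r}_2^{(i)}, \mathbf{r}_3^{(j)} \rangle, \qquad \langle \mathbf{r}_2^{(i)}, \mathbf{r}_3^{(i)} \times \mathbf{r}_3^{(j)} \rangle = \langle \mathbf{r}_1^{(i)}, \mathbf{r}_3^{(j)} \rangle .
\]
Hence $\mathbf{a}_{ij} = B^{(i)} \mathbf{r}_3^{(j)}$ where $B^{(i)} := \bigl(\begin{smallmatrix} -(\mathbf{r}_2^{(i)})^\top \\ (\mathbf{r}_1^{(i)})^\top \end{smallmatrix}\bigr) \in \mathbb{R}^{2 \times 3}$; in particular this recovers $\mathbf{a}_{ii} = 0$ by orthonormality of the frame, so no separate diagonal case is needed. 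Stacking the $B^{(i)}$ vertically into $B \in \mathbb{R}^{2n \times 3}$ and setting $V := \bigl( \mathbf{r}_3^{(1)} \,\, \cdots \,\, \mathbf{r}_3^{(n)} \bigr) \in \mathbb{R}^{3 \times n}$ yields $A = BV$, so $\operatorname{rank}(A) \le 3$ unconditionally.

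For the reverse inequality I would argue that generically $\operatorname{rank}(B) = \operatorname{rank}(V) = 3$: then $B$ is injective on $\mathbb{R}^3$ and $V$ surjective onto $\mathbb{R}^3$, so the image of $A = BV$ equals $B(\mathbb{R}^3)$, which is $3$-dimensional, giving $\operatorname{rank}(A) = 3$. Each of the conditions $\operatorname{rank}(B) \ge 3$ and $\operatorname{rank}(V) \ge 3$ is the non-vanishing of (at least) one $3 \times 3$ minor, hence cuts out a Zariski-open subset of $\operatorname{SO}(3)^n$; since $\operatorname{SO}(3)$ is an irreducible real algebraic variety, so is $\operatorname{SO}(3)^n$, and it suffices to exhibit a single tuple of rotations for which both minors are nonzero. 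This is easy when $n \ge 3$: choose the rotations so that three of the vectors $\mathbf{r}_3^{(i)}$ are linearly independent (making $\operatorname{rank} V = 3$) and so that among $\mathbf{r}_1^{(1)}, \mathbf{r}_2^{(1)}, \mathbf{r}_1^{(2)}, \dots$ three are linearly independent (making $\operatorname{rank} B = 3$). The intersection of the two dense Zariski-open sets is again dense and Zariski-open, which is exactly what "Zariski-generic" requires, and the hypothesis $n \ge 3$ enters precisely because $V \in \mathbb{R}^{3 \times n}$ cannot have rank $3$ otherwise.

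The one step needing slight care is the implication $\operatorname{rank}(B) = \operatorname{rank}(V) = 3 \Rightarrow \operatorname{rank}(BV) = 3$: in general the rank of a product can be strictly smaller than the ranks of both factors, so rather than invoke a generic Sylvester-type inequality I would use the specific shapes $2n \times 3$ and $3 \times n$, namely that the rank-$3$ map $V$ already fills the full $3$-dimensional domain of the rank-$3$ (hence injective) map $B$. Apart from this and the routine triple-product simplification, the proof is just the standard fact that a nonempty Zariski-open condition holds generically on an irreducible variety.
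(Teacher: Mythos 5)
Your proof is correct and follows essentially the same route as the paper: the triple-product simplification $\mathbf{a}_{ij} = \bigl(\begin{smallmatrix} -(\mathbf{r}_2^{(i)})^\top \\ (\mathbf{r}_1^{(i)})^\top \end{smallmatrix}\bigr)\mathbf{r}_3^{(j)}$ yields exactly the rank-$3$ factorization \eqref{eq:A-factorization} used there, with genericity of full rank in both factors giving equality. You merely spell out in more detail the two points the paper leaves implicit (why full-rank factors of these shapes force $\operatorname{rank}(BV)=3$, and why the full-rank condition is a nonempty Zariski-open, hence generic, condition on $\operatorname{SO}(3)^n$), which is a welcome but not substantively different elaboration.
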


\begin{proof}
Since
\begin{align*}
\mathbf{a}_{ij} = \begin{pmatrix} 
\langle \mathbf{r}_1^{(i)}, \mathbf{r}_3^{(i)} \times \mathbf{r}_3^{(j)} \rangle \\ \langle \mathbf{r}_2^{(i)}, \mathbf{r}_3^{(i)} \times \mathbf{r}_3^{(j)} \rangle 
\end{pmatrix} =
\begin{pmatrix} 
\langle \mathbf{r}_1^{(i)} \times \mathbf{r}_3^{(i)}, \mathbf{r}_3^{(j)} \rangle \\ \langle \mathbf{r}_2^{(i)} \times \mathbf{r}_3^{(i)}, \mathbf{r}_3^{(j)} \rangle 
\end{pmatrix} = \begin{pmatrix} 
- \langle \mathbf{r}_2^{(i)}, \mathbf{r}_3^{(j)} \rangle \\ \langle \mathbf{r}_1^{(i)}, \mathbf{r}_3^{(j)} \rangle 
\end{pmatrix}
 =  \begin{pmatrix}  - {\mathbf{r}_2^{(i)}}^{\!\top}  \\  {\mathbf{r}_1^{(i)}}^{\!\top}  \end{pmatrix}_{2 \times 3} \mathbf{r}_3^{(j)},
\end{align*}
the pure commons line matrix admits the following factorization:
\begin{equation}{\label{eq:A-factorization}}
A =  \begin{pmatrix} - {\mathbf{r}_2^{(1)}}^{\!\top} \\ {\mathbf{r}_1^{(1)}}^{\!\top} \\ 
\vdots \\  -{\mathbf{r}_2^{(n)}}^{\!\top} \\ {\mathbf{r}_1^{(n)}}^{\!\top}  \end{pmatrix}_{2n \times 3} \!\!\! \begin{pmatrix}  \mathbf{r}_3^{(1)} & \cdots & \mathbf{r}_3^{(n)} \end{pmatrix}_{3 \times n}.
\end{equation}
Equation \eqref{eq:A-factorization} witnesses $\operatorname{rank}(A) \leq \min(3,n)$ $ = 3$.  We have equality when $R^{(i)}$ are generic because the two matrices in the factorization are full rank.
\end{proof}

There are also necessary quadratic constraints in the entries of a pure common lines matrix.

\medskip

\begin{proposition}{\label{prop:length}}
    Suppose $A \in \mathbb{R}^{2n \times n}$ is a pure common lines matrix where $n \geq 3$. Then for any $1 \leq i < j \leq n$, we have $\|\mathbf{a}_{ij}\|_2^2 = \|\mathbf{a}_{ji}\|_2^2$.
\end{proposition}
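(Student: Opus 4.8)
The plan is to reduce $\|\mathbf{a}_{ij}\|_2^2$ to a quantity that is manifestly symmetric under swapping $i$ and $j$. First I would start from the simplified form of the blocks already extracted in the proof of \Cref{prop:rank}: using the scalar triple product identity together with the right-handedness of $R^{(i)}\in\operatorname{SO}(3)$ (so that $\mathbf{r}_1^{(i)}\times\mathbf{r}_3^{(i)}=-\mathbf{r}_2^{(i)}$ and $\mathbf{r}_2^{(i)}\times\mathbf{r}_3^{(i)}=\mathbf{r}_1^{(i)}$), one has, for a pure common lines matrix,
\[
\mathbf{a}_{ij}=\begin{pmatrix}-\langle\mathbf{r}_2^{(i)},\mathbf{r}_3^{(j)}\rangle\\[2pt]\langle\mathbf{r}_1^{(i)},\mathbf{r}_3^{(j)}\rangle\end{pmatrix},\qquad
\mathbf{a}_{ji}=\begin{pmatrix}-\langle\mathbf{r}_2^{(j)},\mathbf{r}_3^{(i)}\rangle\\[2pt]\langle\mathbf{r}_1^{(j)},\mathbf{r}_3^{(i)}\rangle\end{pmatrix},
\]
where for $\mathbf{a}_{ji}$ one also uses $\mathbf{r}_3^{(i)}\times\mathbf{r}_3^{(j)}=-\,\mathbf{r}_3^{(j)}\times\mathbf{r}_3^{(i)}$ to absorb the minus signs from \Cref{def:representatives}. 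Hence $\|\mathbf{a}_{ij}\|_2^2=\langle\mathbf{r}_1^{(i)},\mathbf{r}_3^{(j)}\rangle^2+\langle\mathbf{r}_2^{(i)},\mathbf{r}_3^{(j)}\rangle^2$.

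Next I would invoke that the rows $\{\mathbf{r}_1^{(i)},\mathbf{r}_2^{(i)},\mathbf{r}_3^{(i)}\}$ form an orthonormal basis of $\mathbb{R}^3$, so Pythagoras gives $1=\|\mathbf{r}_3^{(j)}\|_2^2=\sum_{k=1}^3\langle\mathbf{r}_k^{(i)},\mathbf{r}_3^{(j)}\rangle^2$. Substituting, $\|\mathbf{a}_{ij}\|_2^2=1-\langle\mathbf{r}_3^{(i)},\mathbf{r}_3^{(j)}\rangle^2$. The right-hand side is symmetric in $i$ and $j$; running the identical computation for $\mathbf{a}_{ji}$ (using the basis $\{\mathbf{r}_k^{(j)}\}$) yields $\|\mathbf{a}_{ji}\|_2^2=1-\langle\mathbf{r}_3^{(j)},\mathbf{r}_3^{(i)}\rangle^2=1-\langle\mathbf{r}_3^{(i)},\mathbf{r}_3^{(j)}\rangle^2$, which gives the claim.

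There is essentially no serious obstacle; the only thing requiring care is the sign and index bookkeeping when passing from the cross-product expressions in \Cref{def:representatives} to the inner-product expressions, and noting that purity ($\lambda_{ij}=1$) removes all scaling factors — although, since \Cref{def:representatives} forces $\lambda_{ij}=\lambda_{ji}$, the same argument in fact gives $\|\mathbf{a}_{ij}\|_2^2=\lambda_{ij}^2\bigl(1-\langle\mathbf{r}_3^{(i)},\mathbf{r}_3^{(j)}\rangle^2\bigr)=\|\mathbf{a}_{ji}\|_2^2$ for any common lines matrix. It is also worth recording the geometric interpretation that emerges for free: $\|\mathbf{a}_{ij}\|_2=|\sin\theta_{ij}|$ where $\theta_{ij}$ is the angle between the viewing directions $\mathbf{r}_3^{(i)}$ and $\mathbf{r}_3^{(j)}$, which makes the symmetry transparent.
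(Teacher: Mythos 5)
Your proposal is correct and follows essentially the same route as the paper: both reduce $\|\mathbf{a}_{ij}\|_2^2$ to $1-\langle\mathbf{r}_3^{(i)},\mathbf{r}_3^{(j)}\rangle^2$ by expanding $\mathbf{r}_3^{(j)}$ in the orthonormal basis given by the rows of $R^{(i)}$ (the paper phrases this as $\|R^{(j)}\mathbf{r}_3^{(i)}\|_2^2=1$, which is the same Parseval identity), and then observes the symmetry in $i$ and $j$. The sign bookkeeping for $\mathbf{a}_{ij}$ and $\mathbf{a}_{ji}$ matches the computation already carried out in the proof of \Cref{prop:rank}, so no gap remains.
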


\begin{proof}
    See Appendix \ref{appendix:a}.
\end{proof}

\begin{proposition}{\label{prop:determinant}}
Suppose $A \in \mathbb{R}^{2n \times n}$ is a pure common lines matrix where $n \geq 3$. Then for any $1 \leq i < j < k \leq n$, we have $\mathrm{det}\begin{pmatrix}
    \mathbf{a}_{ij} & \mathbf{a}_{ik}
\end{pmatrix} = -\mathrm{det} \begin{pmatrix}
    \mathbf{a}_{ji} & \mathbf{a}_{jk}
\end{pmatrix} = \mathrm{det}\begin{pmatrix}
    \mathbf{a}_{ki} & \mathbf{a}_{kj}
\end{pmatrix}$.

\end{proposition}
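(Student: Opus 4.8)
The plan is to identify all three $2 \times 2$ determinants with a single intrinsic quantity, namely the scalar triple product $\det\begin{pmatrix}\mathbf{r}_3^{(i)} & \mathbf{r}_3^{(j)} & \mathbf{r}_3^{(k)}\end{pmatrix}$ of the third rows, up to sign. First I would reuse the $2\times 3$/$3\times 1$ factorization already established in the proof of \Cref{prop:rank}: writing $B^{(i)} := \begin{pmatrix} -{\mathbf{r}_2^{(i)}}^{\!\top} \\ {\mathbf{r}_1^{(i)}}^{\!\top}\end{pmatrix} \in \mathbb{R}^{2\times 3}$, we have $\mathbf{a}_{ij} = B^{(i)}\mathbf{r}_3^{(j)}$, hence $\begin{pmatrix}\mathbf{a}_{ij} & \mathbf{a}_{ik}\end{pmatrix} = B^{(i)}\begin{pmatrix}\mathbf{r}_3^{(j)} & \mathbf{r}_3^{(k)}\end{pmatrix}$. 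Expanding the determinant of this product directly gives $\det\begin{pmatrix}\mathbf{a}_{ij} & \mathbf{a}_{ik}\end{pmatrix} = \langle \mathbf{r}_1^{(i)},\mathbf{r}_3^{(j)}\rangle\langle \mathbf{r}_2^{(i)},\mathbf{r}_3^{(k)}\rangle - \langle \mathbf{r}_1^{(i)},\mathbf{r}_3^{(k)}\rangle\langle \mathbf{r}_2^{(i)},\mathbf{r}_3^{(j)}\rangle$ (the two minus signs in $B^{(i)}$ cancel).

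The key step is to recognize the right-hand side, via the Binet--Cauchy (Lagrange) identity $(\mathbf{u}\times\mathbf{v})\cdot(\mathbf{w}\times\mathbf{z}) = \langle\mathbf{u},\mathbf{w}\rangle\langle\mathbf{v},\mathbf{z}\rangle - \langle\mathbf{u},\mathbf{z}\rangle\langle\mathbf{v},\mathbf{w}\rangle$, as $(\mathbf{r}_1^{(i)}\times\mathbf{r}_2^{(i)})\cdot(\mathbf{r}_3^{(j)}\times\mathbf{r}_3^{(k)})$. Since $R^{(i)}\in\operatorname{SO}(3)$, its rows form a positively oriented orthonormal basis, so $\mathbf{r}_1^{(i)}\times\mathbf{r}_2^{(i)} = \mathbf{r}_3^{(i)}$ (this uses $\det R^{(i)} = +1$, not merely orthogonality). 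Therefore $\det\begin{pmatrix}\mathbf{a}_{ij} & \mathbf{a}_{ik}\end{pmatrix} = \mathbf{r}_3^{(i)}\cdot(\mathbf{r}_3^{(j)}\times\mathbf{r}_3^{(k)}) = \det\begin{pmatrix}\mathbf{r}_3^{(i)} & \mathbf{r}_3^{(j)} & \mathbf{r}_3^{(k)}\end{pmatrix}$.

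Finally I would apply this identity with the roles of the indices permuted: $\det\begin{pmatrix}\mathbf{a}_{ji} & \mathbf{a}_{jk}\end{pmatrix} = \det\begin{pmatrix}\mathbf{r}_3^{(j)} & \mathbf{r}_3^{(i)} & \mathbf{r}_3^{(k)}\end{pmatrix}$ and $\det\begin{pmatrix}\mathbf{a}_{ki} & \mathbf{a}_{kj}\end{pmatrix} = \det\begin{pmatrix}\mathbf{r}_3^{(k)} & \mathbf{r}_3^{(i)} & \mathbf{r}_3^{(j)}\end{pmatrix}$, and conclude using the alternating property of the determinant: swapping the first two columns produces the sign flip relating the first expression to the second, while the cyclic permutation $(i,j,k)\mapsto(k,i,j)$ leaves the triple product unchanged, giving equality with the third. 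There is no real obstacle here; the proof is a short computation, and the only thing that needs care is the sign bookkeeping — tracking the minus signs in $B^{(i)}$, correctly invoking $\mathbf{r}_1^{(i)}\times\mathbf{r}_2^{(i)} = \mathbf{r}_3^{(i)}$, and keeping straight how the scalar triple product transforms under the relevant transpositions and cyclic shifts of its arguments.
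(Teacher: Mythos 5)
Your proof is correct and follows essentially the same strategy as the paper's: both arguments identify each of the three $2\times 2$ determinants with the scalar triple product $D=\det\begin{pmatrix}\mathbf{r}_3^{(i)} & \mathbf{r}_3^{(j)} & \mathbf{r}_3^{(k)}\end{pmatrix}$ and then conclude by the alternating property under the transposition $(i,j)$ and the cyclic shift to $(k,i,j)$. The only difference is mechanical — you verify $\det\begin{pmatrix}\mathbf{a}_{ij} & \mathbf{a}_{ik}\end{pmatrix}=D$ via the Binet--Cauchy identity together with $\mathbf{r}_1^{(i)}\times\mathbf{r}_2^{(i)}=\mathbf{r}_3^{(i)}$, whereas the paper left-multiplies $\begin{pmatrix}\mathbf{r}_3^{(i)} & \mathbf{r}_3^{(j)} & \mathbf{r}_3^{(k)}\end{pmatrix}$ by a determinant-one matrix built from $R^{(i)}$ and expands along the first column; both hinge on $\det R^{(i)}=+1$ in the same essential way.
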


\begin{proof}
See Appendix \ref{appendix:a}.
\end{proof}

From Propositions \ref{prop:length} and \ref{prop:determinant}, the number of quadratic constraints on a pure common lines matrix equals ${n \choose 2} + 2{n \choose 3}$.

\medskip

\begin{example}
Consider $n = 4$. Then a pure common lines matrix written in block form,
\begin{equation*}
A = \begin{pmatrix}
\mathbf{0} & \mathbf{a}_{12} & \mathbf{a}_{13} & \mathbf{a}_{14}  \\
\mathbf{a}_{21} & \mathbf{0} & \mathbf{a}_{23} & \mathbf{a}_{24}  \\
\mathbf{a}_{31} & \mathbf{a}_{32} & \mathbf{0} & \mathbf{a}_{34} \\
\mathbf{a}_{41} & \mathbf{a}_{42} & \mathbf{a}_{43} & \mathbf{0}
\end{pmatrix}
\end{equation*}
has rank at most 3 and satisfies 14 quadratic equations, which are
\begin{gather}
\begin{aligned}
    \|\mathbf{a}_{12}\|_2^2 &= \|\mathbf{a}_{21}\|_2^2 & \|\mathbf{a}_{14}\|_2^2 &= \|\mathbf{a}_{41}\|_2^2 \\
    \|\mathbf{a}_{13}\|_2^2 &= \|\mathbf{a}_{31}\|_2^2 & \|\mathbf{a}_{24}\|_2^2 &= \|\mathbf{a}_{42}\|_2^2 \\
    \|\mathbf{a}_{23}\|_2^2 &= \|\mathbf{a}_{32}\|_2^2 & \|\mathbf{a}_{34}\|_2^2 &= \|\mathbf{a}_{43}\|_2^2
\end{aligned} \nonumber\\
\text{det}\begin{pmatrix}
        \mathbf{a}_{12} & \mathbf{a}_{13}
    \end{pmatrix} = -\text{det}\begin{pmatrix}
        \mathbf{a}_{21} & \mathbf{a}_{23}
    \end{pmatrix} = \text{det}\begin{pmatrix}
        \mathbf{a}_{31} & \mathbf{a}_{32}
    \end{pmatrix} \nonumber
\\
\text{det}\begin{pmatrix}
        \mathbf{a}_{12} & \mathbf{a}_{14}
    \end{pmatrix} = -\text{det}\begin{pmatrix}
        \mathbf{a}_{21} & \mathbf{a}_{24}
    \end{pmatrix} = \text{det}\begin{pmatrix}
        \mathbf{a}_{41} & \mathbf{a}_{42}
    \end{pmatrix} \nonumber
\\
\text{det}\begin{pmatrix}
        \mathbf{a}_{13} & \mathbf{a}_{14}
    \end{pmatrix} = -\text{det}\begin{pmatrix}
        \mathbf{a}_{31} & \mathbf{a}_{34}
    \end{pmatrix} = \text{det}\begin{pmatrix}
        \mathbf{a}_{41} & \mathbf{a}_{43}
    \end{pmatrix} \nonumber
\\
\text{det}\begin{pmatrix}
        \mathbf{a}_{23} & \mathbf{a}_{24}
    \end{pmatrix} = -\text{det}\begin{pmatrix}
        \mathbf{a}_{32} & \mathbf{a}_{34}
    \end{pmatrix} = \text{det}\begin{pmatrix}
        \mathbf{a}_{42} & \mathbf{a}_{43}
    \end{pmatrix} \nonumber
\end{gather}
where $\| \cdot \|_2$ denotes the Euclidean norm.
Note that $\text{rank}(A) \leq 3$ is equivalent to the vanishing of all $4 \times 4$ minors of $A$, giving a collection of homogeneous degree $4$ polynomial constraints on the entries of $A$. 
\end{example}

\medskip

We note that \eqref{eq:A-factorization} furnishes a polynomial map which sends an $n$-tuple of rotations to a pure common lines matrix: 
\begin{equation}{\label{phi-map}}
\begin{split}
\psi: \operatorname{SO}(3)^n &\longrightarrow \mathbb{R}^{2n \times n} \\
    (R^{(1)}, \ldots, R^{(n)}) &\mapsto A
\end{split}
\end{equation}
Studying $\psi$ will allow us understand additional important properties of pure common lines matrices. To do this, we will need to introduce some terminology and elementary concepts from algebraic geometry (see \cite{harris1992algebraic} for precise definitions.)

A subset $X \subseteq \mathbb{R}^d$ is called an \textit{algebraic variety} if it is the set of points in $\mathbb{R}^d$ where a finite collection of polynomials all simultaneously equal 0. For example, $SO(3)$ is an algebraic variety since it is the set of matrices $R$ in $\mathbb{R}^{3 \times 3} \cong \mathbb{R}^9$ satisfying the polynomial equations $R^\top R - I_{3 \times 3} = 0_{3 \times 3}$ and $\text{det}(R) - 1 = 0$. 
Roughly speaking, an algebraic variety is similar to an embedded manifold, expect possibly singular and always defined by polynomial equations. 
Due to the properties of polynomials, an algebraic variety $X$ is a ``thin'' subset of $\mathbb{R}^d$ in which it lives: provided $X \neq \mathbb{R}^d$, the complement of $X$ is always a dense subset filling up almost the entirety of the ambient space. 
More precisely, if one samples a random point from $\mathbb{R}^d$ according to \text{any} absolutely continuous probability distribution, then with probability $1$ the point will lie in the complement of $X$. 
We say that some property $P$ holds \textit{(Zariski) generically} if it holds for all points in the complement of \text{some} algebraic variety $X \subsetneq \mathbb{R}^d$, and we call such points \textit{(Zariski) generic}. Roughly speaking, this means that property $P$ holds with probability $1$ (even if, as usually the case, the variety $X \subsetneq \mathbb{R}^d$ is left unspecified).

Recall that the \textit{fiber} of a map at a point $p$ in its image is the set of points in its domain which map to $p$. Therefore to answer the question, ``Does a pure common lines matrix uniquely determine the rotations which generated it?'', we need to understand the fibers of the map $\psi$ in \eqref{phi-map}. Our next result shows that the answer to the question is ``Yes", up to a global rotation, provided the pure common lines matrix is generic.

\medskip

\begin{theorem}{\label{thm:unique-rotations}}
For $n \geq 3$, generically, the fibers of the map $\psi$ are isomorphic to $\operatorname{SO}(3)$. More precisely, for generic rotations $R^{(1)}, \ldots, R^{(n)}$ it holds that 
$$\psi^{-1}(\psi(R^{(1)}, \ldots, R^{(n)})) = \{(R^{(1)}Q, \ldots, R^{(n)}Q) : Q \in \operatorname{SO}(3)\}.$$
\end{theorem}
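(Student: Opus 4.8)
The plan is to show the fiber equals the stated $\operatorname{SO}(3)$-orbit by two inclusions, with the ``$\supseteq$'' direction being a short invariance computation and the ``$\subseteq$'' direction being the substantive part. For ``$\supseteq$'', I would verify directly from the factorization \eqref{eq:A-factorization} that right-multiplying every $R^{(i)}$ by a common $Q \in \operatorname{SO}(3)$ leaves $A$ unchanged: the rows of the left factor become $-\mathbf{r}_2^{(i)\top}Q$ and $\mathbf{r}_1^{(i)\top}Q$, the columns of the right factor become $Q^\top \mathbf{r}_3^{(i)}$, and the product of the two transformed factors recovers the original $A$ since $QQ^\top = I$. Hence every tuple $(R^{(1)}Q,\ldots,R^{(n)}Q)$ lies in the fiber over $\psi(R^{(1)},\ldots,R^{(n)})$, and all such tuples indeed lie in $\operatorname{SO}(3)^n$.

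For ``$\subseteq$'', suppose $(S^{(1)},\ldots,S^{(n)})$ is another preimage of the same pure common lines matrix $A$. Writing $B \in \mathbb{R}^{2n\times 3}$ and $C \in \mathbb{R}^{3\times n}$ for the two factors in \eqref{eq:A-factorization} built from the $R^{(i)}$, and $B', C'$ for those built from the $S^{(i)}$, we have $BC = B'C' = A$, and for generic rotations both factorizations are rank-$3$ factorizations of a rank-$3$ matrix. By uniqueness of such factorizations up to an invertible $3\times 3$ change of basis, there exists $G \in \operatorname{GL}(3,\mathbb{R})$ with $B' = BG$ and $C' = G^{-1}C$. The column condition $C' = G^{-1}C$ says $\mathbf{s}_3^{(i)} = G^{-1}\mathbf{r}_3^{(i)}$ for all $i$; the row condition $B' = BG$ says $-\mathbf{s}_2^{(i)\top} = -\mathbf{r}_2^{(i)\top}G$ and $\mathbf{s}_1^{(i)\top} = \mathbf{r}_1^{(i)\top}G$, i.e. $\mathbf{s}_1^{(i)} = G^\top \mathbf{r}_1^{(i)}$ and $\mathbf{s}_2^{(i)} = G^\top \mathbf{r}_2^{(i)}$. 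It remains to force $G$ into $\operatorname{SO}(3)$ and to reconcile that $S^{(i)}$ is built from $\{G^\top\mathbf{r}_1^{(i)}, G^\top\mathbf{r}_2^{(i)}, G^{-1}\mathbf{r}_3^{(i)}\}$ while being a genuine rotation matrix; setting $Q := G^\top$, so $G^{-1} = (Q^{-1})^\top$, I want to conclude $Q \in \operatorname{SO}(3)$, $S^{(i)} = R^{(i)}Q$, and $G^{-1} = Q$.

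The constraints that pin down $G$ come from orthonormality of the rows of each $S^{(i)}$. From $\mathbf{s}_1^{(i)} = G^\top\mathbf{r}_1^{(i)}$ and $\mathbf{s}_2^{(i)} = G^\top\mathbf{r}_2^{(i)}$ together with $\|\mathbf{s}_1^{(i)}\| = \|\mathbf{s}_2^{(i)}\| = 1$ and $\langle \mathbf{s}_1^{(i)},\mathbf{s}_2^{(i)}\rangle = 0$, the symmetric matrix $M := GG^\top - I$ satisfies $\mathbf{r}_1^{(i)\top}M\mathbf{r}_1^{(i)} = \mathbf{r}_2^{(i)\top}M\mathbf{r}_2^{(i)} = \mathbf{r}_1^{(i)\top}M\mathbf{r}_2^{(i)} = 0$ for every $i$. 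For generic rotations the vectors $\mathbf{r}_1^{(i)}, \mathbf{r}_2^{(i)}$ (and their relevant quadratic combinations across $i = 1,\ldots,n$, using $n \ge 3$) span the $6$-dimensional space of symmetric $3\times 3$ matrices under the pairing $M \mapsto \mathbf{u}^\top M \mathbf{v}$, forcing $M = 0$, hence $GG^\top = I$, i.e. $G$ is orthogonal; a similar genericity argument on $\det$ (or continuity/connectedness of the fiber, which contains the identity component via the ``$\supseteq$'' direction) rules out $\det G = -1$, so $G \in \operatorname{SO}(3)$ and $G^{-1} = G^\top = Q$ with $Q \in \operatorname{SO}(3)$. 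Then $\mathbf{s}_1^{(i)} = Q^\top\mathbf{r}_1^{(i)}$, $\mathbf{s}_2^{(i)} = Q^\top\mathbf{r}_2^{(i)}$, $\mathbf{s}_3^{(i)} = Q^\top\mathbf{r}_3^{(i)}$, which is exactly $S^{(i)} = R^{(i)}Q$. I expect the main obstacle to be making the genericity claim precise, namely exhibiting one explicit tuple of rotations for which the linear map $M \mapsto \bigl(\mathbf{r}_1^{(i)\top}M\mathbf{r}_1^{(i)}, \mathbf{r}_2^{(i)\top}M\mathbf{r}_2^{(i)}, \mathbf{r}_1^{(i)\top}M\mathbf{r}_2^{(i)}\bigr)_{i=1}^n$ on symmetric matrices is injective (so that the locus where it fails is a proper subvariety), and carefully handling the $\det G = -1$ sign — although the latter is cleanly dispatched by noting the fiber is defined by polynomial equations and we have already produced a whole connected $\operatorname{SO}(3)$ worth of solutions, while an orientation-reversing $G$ would not arise from the $\psi$-preimage of a tuple of genuine special-orthogonal matrices.
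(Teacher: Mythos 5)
Your proposal is correct in substance but takes a genuinely different route from the paper. The paper handles $n=3$ by reducing to the fiber over a fixed $R^{(1)} = I$, parametrizing rotations via the Euler--Rodrigues formula, and certifying \emph{numerically} (homotopy continuation on a random instance) that the resulting polynomial system has a unique solution; general $n$ then follows by gluing overlapping triples. You instead use the uniqueness of a rank-$3$ factorization of $A$ up to an invertible change of basis $G \in \operatorname{GL}(3,\mathbb{R})$ and pin down $G$ from the orthonormality of the rows of the second rotation tuple: the symmetric matrix $M = GG^\top - I$ must vanish as a quadratic form on each plane $\operatorname{span}(\mathbf{r}_1^{(i)}, \mathbf{r}_2^{(i)})$, which for generic rotations forces $M = 0$. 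This buys a fully deductive argument, uniform in $n \ge 3$, with explicit genericity hypotheses, whereas the paper's proof rests on a numerical certificate (though the latter also yields the exact fiber cardinality over $\mathbb{C}$). The genericity step you defer is easier than you anticipate and needs no explicit witness tuple: a quadratic form vanishing identically on a plane $\{\ell = 0\}$ is divisible by $\ell$, and a nonzero quadratic form cannot be divisible by three pairwise non-proportional linear forms, so it suffices that the normals $\mathbf{r}_3^{(1)}, \mathbf{r}_3^{(2)}, \mathbf{r}_3^{(3)}$ are pairwise non-parallel. The sign of $\det G$ is also dispatched without any topology: once $G$ is orthogonal, $G^{-1} = G^\top$, so all three rows transform the same way and $S^{(i)} = R^{(i)}G$, whence $\det S^{(i)} = \det G$ forces $\det G = 1$. (Minor bookkeeping: with your conventions the conclusion is $S^{(i)} = R^{(i)}G$, i.e.\ $Q = G$ rather than $Q = G^\top$; this is immaterial since $\operatorname{SO}(3)$ is closed under transposition.)
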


\begin{proof}
See Appendix \ref{appendix:a}.
\end{proof}

\begin{remark}\label{rem:careful}
 \Cref{thm:unique-rotations} does not contradict the chirality ambiguity in cryo-EM, which states that the 3D molecule and rotations can only ever be recovered up to a global rotation \textit{and} global reflection given cryo-EM data.  
In Theorem \ref{thm:row-column-scales} we prove that there are \textit{two} possible pure common lines matrices for a given non-pure common lines matrix.  They differ by a global sign, and correspond to rotation tuples $(R^{(1)}, \ldots, R^{(n)})$ and $(JR^{(1)}, \ldots, JR^{(n)})$ where $J = \operatorname{diag}(-1,-1,1)$, respectively.
The chirality or handedness ambiguity is well-known in the common lines literature~\cite{VANHEEL1987111,shkolniskySinger1} and unavoidable.
\end{remark}

\subsection{The common lines variety}{\label{sec:common-lines-variety}}
In general, the image of a polynomial map from an algebraic variety is not an algebraic variety, because polynomial \textit{inequalities} (in addition to equations) are needed in the description of the image \cite{bochnak2013real}. This means that the set of all pure common lines matrices, that is, the image of $\psi$ from $SO(3)$, on its own is not an algebraic variety. 
To resolve this, we consider the smallest algebraic variety in $\mathbb{R}^{2n \times n}$ containing $\psi(SO(3))$, i.e. we add the smallest set of additional points (which are not pure common lines matrices) to $\psi(SO(3))$ until the union becomes an algebraic variety. The process is called taking the \textit{Zariski closure} of $\psi(SO(3))$. We call the resulting algebraic variety the \textit{\textbf{common lines variety}} and it lives in the ambient space $\mathbb{R}^{2n \times n}$. 
The common lines variety is defined by polynomial equations in the entries of a matrix $A \in \mathbb{R}^{2n \times n}$. Since the common lines variety includes all pure common lines matrices, the polynomials defining it in particular include the constraints we already identified in \Cref{sec:common-matrix}.

\medskip

\begin{example}\label{ex:common-var}
When $n = 3$, we used the computer algebra system Macaulay2 \cite{M2} to determine the collection of polynomial equations defining the common lines variety.  
Along with the 5 quadratic polynomials from Propositions~\ref{prop:length} and~\ref{prop:determinant}, our computation also found 1 polynomial of degree 6, 64 polynomials of degree 8, and 24 polynomials of degree 10, for a total of 94 polynomial equations. Notice that for  $n=3$, the rank $3$ constraint of \Cref{prop:rank} is vacuous. We find the 5 quadratics are the only homogeneous polynomials. 
The other 89 equations are highly complex and we refrain from explicitly writing them here.  They are available at the Github repository \eqref{link:github}. 
\end{example}

\medskip

In view of \Cref{ex:common-var}, we believe that \Cref{sec:common-matrix}  identifies all  ``simple-to-describe" algebraic constraints on pure common lines matrices.  
As such, it is important to understand to what extent these constraints are enough to characterize pure common lines matrices. This requires understanding the geometry of the common lines variety better, for which we will need to use a couple more basic concepts from algebraic geometry described in the next two paragraphs.

In general, every algebraic variety $X \subseteq \mathbb{R}^{d}$ admits a unique decomposition into a finite union of \textit{irreducible components} $X = \bigcup_{i = 1}^r X_i$, 
where $X_i \subseteq \mathbb{R}^d$ are algebraic varieties themselves and each cannot be decomposed as a union of two strictly smaller varieties.  We think of $X_i$ as ``building blocks'' of $X$.  
For example, the variety $\{(x,y) \in \mathbb{R}^2 : xy = 0\}$ is a union of the $x$- and $y$-axes, and these lines are its irreducible components.
In general, each irreducible component $X_i$ can be ascribed a  \textit{dimension}, which  captures the number of degrees of freedom in $X_i$ and coincides with manifold dimension when $X_i$ is smooth. 
We note that the dimension of different irreducible components $X_i$ of $X$ may differ.  

Given an algebraic variety $X \subseteq \mathbb{R}^d$, one can construct from it a larger algebraic variety $C(X) \subseteq \mathbb{R}^d$ called the \textit{cone} over $X$ by adding to $X$ all points in $\mathbb{R}^d$ which lie on a line passing through the origin and a point on $X$. This constructs an algebraic variety that includes all scalar multiples of points of $X$.

Since the constraints we identified in \Cref{sec:common-matrix} are all polynomial equations, they define \Cref{sec:common-matrix} an algebraic variety in $\mathbb{R}^{2n \times n}$.
In the next proposition we show that this algebraic variety contains the cone over the common lines variety as an irreducible component. This means that locally on this component, our constraints from \Cref{sec:common-matrix} are sufficient to characterize pure common lines matrices up to scale. The proof relies on computer algebra software \cite{M2}, and checking that certain numerical matrices are full-rank, so we also report the range of $n$ on which the proposition has been confirmed.

\medskip

\begin{proposition} \label{prop:suffice-locally}
The algebraic variety defined by the low-rank constraint in \Cref{prop:rank}, along with the quadratic constraints in Propositions~\ref{prop:length} and~\ref{prop:determinant}, and the requirement that all diagonal blocks are $0$, contains the cone over the common lines variety in $\mathbb{R}^{2n \times n}$ as an irreducible component for $n = 3, \ldots, 50$.
\end{proposition}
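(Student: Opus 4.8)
Let $\mathcal{V}\subseteq\mathbb{R}^{2n\times n}$ denote the variety cut out by the $4\times4$ minors of a matrix $A$ (the rank constraint of \Cref{prop:rank}), the quadrics of \Cref{prop:length} and \Cref{prop:determinant}, and the linear equations $\mathbf{a}_{ii}=0$; let $\mathcal{L}=\overline{\psi(\operatorname{SO}(3)^n)}$ be the common lines variety, with cone $C(\mathcal{L})$. The plan is to exhibit one explicit smooth point of $\mathcal{V}$ lying on $C(\mathcal{L})$ at which the local dimensions of $\mathcal{V}$ and $C(\mathcal{L})$ coincide, and then to invoke a standard irreducible-component argument.

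First I would record the easy containment and the irreducibility. Every generator of $\mathcal{V}$ is homogeneous (the minors and the quadrics) or linear (the diagonal-block equations), so $\mathcal{V}$ is a cone; since it vanishes on every pure common lines matrix by \Cref{prop:rank,prop:length,prop:determinant} and \Cref{def:common-lines-matr}, it contains $\psi(\operatorname{SO}(3)^n)$ and hence $C(\mathcal{L})$. The variety $C(\mathcal{L})$ is irreducible, being the cone over the Zariski closure of the image of the irreducible variety $\operatorname{SO}(3)^n$ under the morphism $\psi$ of \eqref{phi-map}. By \Cref{thm:unique-rotations} the generic fiber of $\psi$ is a $3$-dimensional coset of $\operatorname{SO}(3)$, so $\dim\mathcal{L}=3n-3$ and $\dim C(\mathcal{L})\in\{3n-3,\,3n-2\}$; I would pin this number down — it turns out to be $3n-2$ — during the core computation below, as the rank of the Jacobian at a random point of the parametrization $(R^{(1)},\dots,R^{(n)},t)\mapsto t\,\psi(R^{(1)},\dots,R^{(n)})$, whose image closure is $C(\mathcal{L})$.

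Next, fix a pseudorandom tuple $(R^{(1)},\dots,R^{(n)})\in\operatorname{SO}(3)^n$ and set $A_0=\psi(R^{(1)},\dots,R^{(n)})\in C(\mathcal{L})\subseteq\mathcal{V}$. The core computation is to verify $\dim T_{A_0}\mathcal{V}=\dim C(\mathcal{L})$. Granting this, the conclusion follows from a chain of inequalities: $C(\mathcal{L})$ is an irreducible subvariety of $\mathcal{V}$ through $A_0$, hence lies in some irreducible component $Z\ni A_0$ of $\mathcal{V}$, and $\dim C(\mathcal{L})\le\dim Z\le\dim_{A_0}\mathcal{V}\le\dim T_{A_0}\mathcal{V}=\dim C(\mathcal{L})$; so all inequalities are equalities, $A_0$ is a smooth point of $\mathcal{V}$, and $Z=C(\mathcal{L})$ for dimension reasons, i.e. $C(\mathcal{L})$ is an irreducible component. (The dimension count is run over $\mathbb{C}$, which is harmless since $C(\mathcal{L})$ is the complexified image closure of the irreducible $\operatorname{SO}(3)^n$.) To get at $T_{A_0}\mathcal{V}$ I would write it as the intersection of three explicit linear subspaces of $\mathbb{R}^{2n\times n}$: (i) the Zariski tangent space at $A_0$ of the determinantal variety of $2n\times n$ matrices of rank $\le3$, which, since $\operatorname{rank}(A_0)=3$ by \Cref{prop:rank}, equals $\{B : U_\perp^{\!\top} B\,V_\perp=0\}$ for orthonormal bases $U_\perp,V_\perp$ of the orthogonal complements of the column and row spaces of $A_0$ (codimension $(2n-3)(n-3)$); (ii) the subspace $\{B : \mathbf{b}_{ii}=0,\ i=1,\dots,n\}$; (iii) the hyperplanes $\{B : \langle\nabla q(A_0),\,B\rangle=0\}$, one for each of the ${n \choose 2}+2{n \choose 3}$ quadratic generators $q$. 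Assembling this linear system and computing its rank — a matrix whose size is polynomial in $n$ — and checking it equals $2n^2-\dim C(\mathcal{L})$ gives $\dim T_{A_0}\mathcal{V}=\dim C(\mathcal{L})$; carrying this out for each $n=3,\dots,50$, preferably in exact rational arithmetic, proves the proposition. For $n=3$ the rank condition is vacuous and only the five quadrics and the diagonal-block equations contribute, matching \Cref{ex:common-var}.

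The main obstacle is computational feasibility rather than anything conceptual. The condition $\operatorname{rank}(A)\le3$ is nominally given by $\binom{2n}{4}\binom{n}{4}$ quartic minors, an astronomical number already for moderate $n$, so one cannot differentiate them directly; the crucial move — the step I would implement with care — is to replace the tangent space of the determinantal variety by the closed form in (i), which collapses the whole question to a single sparse linear-algebra rank computation. The remaining care is genericity: one must ensure the sampled $A_0$ actually has $\operatorname{rank}$ exactly $3$ and that no accidental rank drop occurs in the assembled system, which is handled by drawing $A_0$ at random and, for a rigorous certificate, recomputing the rank symbolically over $\mathbb{Q}$.
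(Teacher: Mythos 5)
Your proposal is correct and follows the same overall strategy as the paper's proof: establish the containment of the cone over the common lines variety in the constraint variety, obtain $\dim = 3n-2$ for that cone from \Cref{thm:unique-rotations} via the fiber dimension theorem, and then certify by a computer-assisted rank computation, separately for each $n=3,\ldots,50$, that the Jacobian-based tangent space of the constraint variety at a point of the cone also has dimension $3n-2$, so that the irreducible component through that point must coincide with the cone. The genuine difference is in how the rank constraint enters the linearization. The paper pulls everything back through the parametrization $\rho(B,C)=BC^{\top}$ of rank-$\le 3$ matrices, so the rank condition holds automatically, differentiates the remaining constraints with respect to $(B,C)$, and corrects for the $9$-dimensional fibers of $\rho$ (observed nullity $3n+7$ minus $9$ gives $3n-2$). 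You instead stay in the ambient space $\mathbb{R}^{2n\times n}$ and replace the differentials of the astronomically many $4\times 4$ minors by the closed-form tangent space $\{B : U_\perp^{\top} B V_\perp = 0\}$ of the determinantal variety at a rank-$3$ point, intersected with the kernels of the differentials of the quadrics and the diagonal-block equations; since the $4\times 4$ minors generate the prime (hence radical) ideal of the determinantal variety and the chosen point is a smooth point of it, this is an exact substitute, so your route is valid and avoids the fiber-correction bookkeeping at the cost of an orthogonal-complement computation. Two minor points in your favor: you explicitly flag that one must rule out $\dim C(\mathcal{L}) = 3n-3$ (i.e., that the common lines variety is not already a cone), which the paper asserts only implicitly in writing $\dim(\mathcal{V}_n)=3n-2$; and you propose exact rational arithmetic (feasible via rational quaternion parametrizations of the rotations), whereas the paper's certificate is a double-precision numerical nullity.
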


\begin{proof}
    See Appendix \ref{appendix:a}.
\end{proof}

\section{Optimization problem}

We encode the common lines from cryo-EM data by choosing representatives $\widehat{\mathbf{a}}_{ij} \in \mathbb{R}^2$ (\Cref{def:representatives}) to form the $2 \times 1$ blocks in a common lines matrix $\widehat{A} \in \mathbb{R}^{2n \times n}$.  
Suppose we have rescaled the $2 \times 1$ blocks of $\widehat{A}$ so they all have norm 1. Then at least in clean situations, \Cref{prop:rank} and Propositions \ref{prop:length} and \ref{prop:determinant} imply we can scale the blocks by nonzero scalars $\lambda_{ij}$ with $\lambda_{ij} = \lambda_{ji}$ so that the resulting matrix is a \textit{pure} common lines matrix $A$, and thus has rank 3 and satisfies the set of norm and $2 \times 2$ determinant equations. 
\Cref{prop:suffice-locally} states that these constraints  are sufficient to determine the common lines variety locally.
In Section \ref{sec:correctness}, we further prove that for purposes of recovering scales $\lambda_{ij}$ to obtain a \text{pure} common lines matrix, the constraints are also sufficient.

Proper scales are not directly available from common lines data in cryo-EM.  To find the scales we formulate an optimization problem,  inspired by work for a mathematically similar problem in  \cite{scalingAmit}:
\begin{subequations}{\label{eq:opt-problem}}
\begin{align}
\min_{\substack{\{\mathbf{a}_{ij}\},\{\lambda_{ij}\} \\ i,j = 1, \ldots, n}} \quad & \sum_{i,j = 1}^n \|\widehat{\mathbf{a}}_{ij} - \lambda_{ij}\mathbf{a}_{ij}\|_2 {\tag{\ref{eq:opt-problem}}} \\
\textrm{subject to} \quad\quad & \begin{cases} \mathbf{a}_{ii} = {0} \text{ for all } 1 \leq i \leq n \\
 \mathrm{rank}(A) = 3 \\
 {\lambda_{ij} = \lambda_{ji}  \text{ for all } 1 \leq i < j \leq n},\end{cases} {\label{eq:opt-const-1}} \\
&   \begin{cases}  \|\mathbf{a}_{ij}\|_2^2 = \|\mathbf{a}_{ji}\|_2^2 \\
   \text{det}\begin{pmatrix}
        \mathbf{a}_{ij} & \mathbf{a}_{ik}
    \end{pmatrix} = -\text{det}\begin{pmatrix}
        \mathbf{a}_{ji} & \mathbf{a}_{jk}
    \end{pmatrix} = \text{det}\begin{pmatrix}
        \mathbf{a}_{ki} & \mathbf{a}_{kj}
    \end{pmatrix} \\
 \text{for all } 1 \leq i < j < k \leq n.
\end{cases} {\label{eq:opt-const-2}}
\end{align}
\end{subequations}

\medskip

\noindent The mixed L1/Frobenius norm $\|\cdot\|_2$ in the objective is chosen for its robustness to outliers.

Once we obtain a pure common lines matrix, we show in Section \ref{sec:rot-recovery} how to recover the rotations corresponding to the common lines (up to the ambiguity in \Cref{rem:careful}). 
Later in Section \ref{sec:heterogeneity-application}, we solve the problem \eqref{eq:opt-problem} to identify homogeneous clusters among images coming from a discrete number of distinct molecules. 

\section{Optimization algorithms}\label{sec:algo}

Our approach to solving \eqref{eq:opt-problem} is first to solve the problem with constraint \eqref{eq:opt-const-1} only, and then to enforce the constraint \eqref{eq:opt-const-2} on the solution. These steps are in Sections \ref{sec:admm-irls} and \ref{sec:sinkhorn} respectively.

\subsection{IRLS and ADMM for the rank constraint}{\label{sec:admm-irls}}

To solve \eqref{eq:opt-problem} with the rank constraint, we closely follow the approach of ~\cite{scalingAmit}. We relax the mixed L1/Frobenius norm to a weighted least squares objective, where the weights and optimization variables are updated after each iteration of minimization via a procedure called Iterative Reweighted Least Squares (IRLS) \cite{IRLS}. Let $t$ denote the IRLS iteration number. Then the objective \eqref{eq:opt-problem} becomes:
\begin{align}{\label{eq:opt-IRLS}}
\min_{A \in \mathbb{R}^{2n \times n}, \; \Lambda \in \mathbb{R}^{n \times n}} \quad & \|\widehat{A} - (\Lambda \otimes \mathbf{1}_{2 \times 1}) \odot A\|_{WF}^2
\end{align}
where $\otimes$ and $\odot$ is the Kronecker and Hadamard product of two matrices respectively, $\Lambda_{ij} = \lambda_{ij}$,
\begin{equation*}
\|M\|_{WF}^2 := \sum_{i,j = 1}^n w_{ij}^{(t)}\|\mathbf{m}_{ij}\|_2^2
\end{equation*}
is the squared weighted Frobenius norm of a block matrix $M \in \mathbb{R}^{2n \times n}$, the weights in \eqref{eq:opt-IRLS} are
\begin{equation}{\label{eq:irls-weights}}
w_{ij}^{(t)} = \begin{cases}
    1/(\max\{\delta,\|\widehat{\mathbf{a}}_{ij} - \lambda_{ij}^{(t-1)}\mathbf{a}_{ij}^{(t-1)}\|_2\}) &\quad \text{if } i \neq j \\
    0 &\quad \text{if } i = j,
\end{cases}
\end{equation}
\noindent and $0 < \delta \ll 1$ is a chosen regularization parameter.

Within each iteration of IRLS, we need to solve the problem \eqref{eq:opt-IRLS} with the constraints \eqref{eq:opt-const-1}. Since the objective is bilinear in $A$ and $\Lambda$, we can do this using the Alternate Direction Method of Multiplier (ADMM) \cite{boyd2011distributed, admm-goldstein}. This gives an augmented Lagrangian optimization problem:
\begin{equation}{\label{eq:admm}}
\begin{aligned}
\max_{\Gamma \in \mathbb{R}^{2n \times n}} \quad \min_{A,B \in \mathbb{R}^{2n \times n}, \; \Lambda \in \mathbb{R}^{n \times n}} \quad & \frac{1}{2}\|\widehat{A} - (\Lambda \otimes \mathbf{1}_{2 \times 1}) \odot A\|_{WF}^2 + \frac{\tau}{2}\|B - (A + \Gamma)\|_{F}^2 \\
\textrm{subject to} \quad\quad & \begin{cases} \mathbf{a}_{ii} = \mathbf{0} \text{ for all } 1 \leq i \leq n \\
\Lambda = \Lambda^\top \\
\mathrm{rank}(B) = 3, \end{cases}
\end{aligned}
\end{equation}
where $\Gamma$ is a matrix of Lagrange multipliers and $\tau = \sum_{i,j=1}^n w_{ij}^{(t)}$. We now describe the steps of the ADMM procedure. Since the problem \eqref{eq:admm} is non-convex, we alternatingly optimize for each variable. In the following, let $k$ denote the ADMM iteration number and let $W^{(t)} \in \mathbb{R}^{n \times n}$ where $W_{ij}^{(t)} = w_{ij}^{(t)}$ be the matrix of weights within IRLS iteration $t$.

\medskip

\noindent \textbf{1. Optimize $A$ and $\Lambda$:} We alternatingly optimize for $A$ and $\Lambda$ until convergence. Let $k^\prime$ denote the iteration number for this step.
\begin{enumerate}
    \item[1a.] First we solve the unconstrained problem for $A$:
    \begin{align*}
 \min_{A \in \mathbb{R}^{2n \times n}} & \quad \frac{1}{2}\|\widehat{A} - (\Lambda^{(k^\prime)} \otimes \mathbf{1}_{2 \times 1}) \odot A\|_{WF}^2 + \frac{\tau}{2}\|B^{(k)} - (A + \Gamma^{(k)})\|_F^2
\end{align*}
The solution is
\begin{equation}{\label{eq:admm-A}}
    \begin{aligned}
        A^{(k^\prime+1)} &= \left((W^{(t)} \otimes \mathbf{1}_{2 \times 1}) \odot (\Lambda^{(k^\prime)} \otimes \mathbf{1}_{2 \times 1}) \odot \widehat{A} + \frac{\tau}{4}(B^{(k)} + \Gamma^{(k)})\right) \\
        & \oslash\left((W^{(t)} \otimes \mathbf{1}_{2 \times 1}) \odot (\Lambda^{(k^\prime)} \otimes \mathbf{1}_{2 \times 1}) \odot (\Lambda^{(k^\prime)} \otimes \mathbf{1}_{2 \times 1}) + \frac{\tau}{4}\textbf{1}_{2n \times n}\right)
    \end{aligned}
\end{equation}
    where $\oslash$ is the element-wise division of two matrices. Then we project $A$ onto the set of matrices whose $2 \times 1$ diagonals are 0:
    \begin{equation}{\label{eq:admm-A-zero}}
        \mathbf{a}^{(k^\prime+1)}_{ii} = \mathbf{0}
    \end{equation}
    \item[1b.] Next we solve the unconstrained problem for $\Lambda$:
\begin{equation*}
\begin{aligned}
\min_{\Lambda \in \mathbb{R}^{n \times n}} & \quad \frac{1}{2}\|\widehat{A} - (\Lambda \otimes \mathbf{1}_{2 \times 1}) \odot A^{(k^\prime+1)}\|_{WF}^2 \\
\textrm{subject to} &\quad  \Lambda = \Lambda^\top
\end{aligned}    
\end{equation*}

The solution is 
\begin{equation}{\label{eq:admm-lambda}}
        \lambda_{ij}^{(k^\prime+1)} = \begin{cases}
            \frac{w_{ij}\langle \widehat{\mathbf{a}}_{ij}, \mathbf{a}_{ij}^{(k^\prime+1)} \rangle + w_{ji}\langle \widehat{\mathbf{a}}_{ji}, \mathbf{a}_{ji}^{(k^\prime+1)} \rangle}{w_{ij}\|\mathbf{a}_{ij}^{(k^\prime+1)}\|_2^2+ w_{ji}\|\mathbf{a}_{ji}^{(k^\prime+1)}\|_2^2} &\quad \text{ if } i \neq j \\[1em]
            0 &\quad \text{ if } i = j.
        \end{cases}
    \end{equation}

\end{enumerate}
After repeating 1a. and 1b. until convergence, we obtain $A^{(k+1)}$ and $\Lambda^{(k+1)}$.

\medskip

{\label{admm-new-B}}

\noindent \textbf{2. Optimize $B$:} The constrained problem for $B$ is
\begin{align*}
 \min_{B \in \mathbb{R}^{2n \times n}} & \quad \frac{\tau}{2}\|B - (A^{(k+1)} + \Gamma^{(k)})\|_F^2  \\
\textrm{subject to} & \quad \mathrm{rank}(B) = 3
\end{align*}
This is solved by
\begin{equation}{\label{eq:admm-B}}
    B^{(k+1)} = SVP(A^{(k+1)} - \Gamma^{(k)},3)
\end{equation}
where $SVP(M,3)$ is the singular value projection of a matrix $M$ onto the set of matrices of rank at most $3$, which is computing by taking the highest three singular values of $M$ and its corresponding left and right singular vectors.

\medskip

\noindent \textbf{3. Update $\Gamma$:} In ADMM, there is a gradient ascent step for $\Gamma$, where the step is the solution to
\begin{equation*}
    \max_{\Gamma \in \mathbb{R}^{2n \times n}} \|B^{(k+1)} - (A^{(k+1)} - \Gamma)\|_F^2.
\end{equation*}
This gives the update
\begin{equation}{\label{eq:admm-gamma}}
    \Gamma^{(k+1)} = \Gamma^{(k)} + (B^{(k+1)} - A^{(k+1)}).
\end{equation}

Steps 1., 2., and 3. are repeated until convergence in the optimization variables. This completes the ADMM procedure for IRLS iteration $t$. The IRLS weights $w_{ij}^{(t+1)}$ for iteration $t+1$ are updated using \eqref{eq:irls-weights}, and the ADMM procedure is repeated again. 
The whole pipeline is detailed in Algorithm \ref{alg:irls-admm}.

\begin{algorithm}
\caption{IRLS and ADMM for rank constraint satisfaction}{\label{alg:irls-admm}}
\begin{algorithmic}[1]
\Input{$\widehat{A} \in \mathbb{R}^{2n \times n}$, a common lines matrix}
\Output{$A \in \mathbb{R}^{2n \times n}$, a common lines matrix satisfying only the rank constraint}
\Procedure{IRLS-ADMM}{$\widehat{A}$}
\State initialize $A, \Lambda, W$
\State $t \gets 0$
\While{not converged}
    \State $B \gets A$
    \State $\Gamma \gets 0_{2n \times n}$
    \State $\tau \gets \sum_{i,j=1}^n w_{ij}^{(t)}$
    \State $k \gets 0$
    \While{not converged}
        \State $k' \gets 0$
        \While{not converged}
            \State update $A$ using \eqref{eq:admm-A} and \eqref{eq:admm-A-zero} \Comment{1. Update $A$ and $\Lambda$}
            \State update $\Lambda$ using \eqref{eq:admm-lambda}
            \State $k' \gets k' + 1$
        \EndWhile
        \State update $B$ using \eqref{eq:admm-B} \Comment{2. Update $B$}
        \State update $\Gamma$ using \eqref{eq:admm-gamma} \Comment{3. Update $\Gamma$}
        \State $k \gets k + 1$
    \EndWhile
    \State update $W$ using \eqref{eq:irls-weights} \Comment{Update IRLS weights}
    \State $t \gets t + 1$
\EndWhile
\EndProcedure
\end{algorithmic}
\end{algorithm}

\subsection{Sinkhorn scaling for the quadratic constraints}{\label{sec:sinkhorn}}
When successful, \textsc{IRLS-ADMM} in \Cref{sec:admm-irls} gives us a solution  $A$ to \eqref{eq:opt-problem} satisfying the constraint \eqref{eq:opt-const-1}.
Next we must enforce \eqref{eq:opt-const-2}. 
As described below, our approach is to scale the rows and columns of $A$ alternatingly until constraint \eqref{eq:opt-const-2} is satisfied, in a manner analogous to Sinkhorn's algorithm \cite{sinkhorn1967}. Note that nonzero row and column scales do not affect the rank of $A$, so constraint \eqref{eq:opt-const-1} will still be satisfied.

We find the row and column scales by solving least squares problems. First we handle the norm constraints. 
Define $M \in \mathbb{R}^{n \times n}$ where
\begin{equation}{\label{eq:M-matrix-symmetry}}
M_{ij} = \|\mathbf{a}_{ij}\|_2^2   
\end{equation}

\medskip

\noindent Then the norm constraints are satisfied if and only if $M = M^\top$, which leads us to the following constrained least squares problems:
\begin{equation}{\label{sinkhorn-d1}}
    \boldsymbol{\mu} = \argmin_{\|\boldsymbol{\mu}\|_2=1} \|\text{diag}(\boldsymbol{\mu})M - (\text{diag}(\boldsymbol{\mu})M)^\top\|_F^2
\end{equation}
\begin{equation}{\label{sinkhorn-d2}}
    \boldsymbol{\tau} = \argmin_{\|\boldsymbol{\tau}\|_2=1} \|M\text{diag}(\boldsymbol{\tau}) - (M\text{diag}(\boldsymbol{\tau}))^\top\|_F^2
\end{equation}

\noindent

\noindent The solutions to problems \eqref{sinkhorn-d1} and \eqref{sinkhorn-d2} are
\begin{equation}{\label{D1-minimization}}
    \min_{\|\boldsymbol{\mu}\|_2=1} \left\|N_L  \boldsymbol{\mu}\right\|_2^2,
\end{equation}
\begin{equation}{\label{D2-minimization}}
    \min_{\|\boldsymbol{\tau}\|_2=1} \left\|N_R \boldsymbol{\tau}\right\|_2^2
\end{equation}
respectively, where $N_L, N_R \in \mathbb{R}^{n \times n}$ are the corresponding least squares matrices. See Appendix \ref{appendix:b} for full detail. The problems \eqref{D1-minimization} and \eqref{D2-minimization} are solved by taking $\boldsymbol{\mu}$ and $\boldsymbol{\tau}$ to be the right singular vector corresponding to the smallest singular value of $N_L$ and $N_R$ respectively.

Now we handle the determinant constraints. Scaling each $2 \times n$ row of $A$ by $\mu_1,\ldots,\mu_n \in \mathbb{R}$ and enforcing the constraints leads us to the equations
\begin{equation}{\label{eq:determinant-row-scalings}}
    \mu_i^2\text{det}\begin{pmatrix}
        \mathbf{a}_{ij} & \mathbf{a}_{ik}
    \end{pmatrix} = -\mu_j^2\text{det}\begin{pmatrix}
        \mathbf{a}_{ji} & \mathbf{a}_{jk}
    \end{pmatrix} = \mu_k^2\text{det}\begin{pmatrix}
        \mathbf{a}_{ki} & \mathbf{a}_{kj}
    \end{pmatrix}
\end{equation}
for all $1 \leq i < j < k \leq n$. Taking the signed root on each equation, we obtain
\begin{equation}{\label{eq:determinant-scales-rows}}
\begin{aligned}
    \mu_i\text{sgn}(\text{det}\begin{pmatrix}
        \mathbf{a}_{ij} & \mathbf{a}_{ik}
    \end{pmatrix})\sqrt{|\text{det}\begin{pmatrix}
        \mathbf{a}_{ij} & \mathbf{a}_{ik}
    \end{pmatrix}|} &= -\mu_j\text{sgn}(\text{det}\begin{pmatrix}
        \mathbf{a}_{ji} & \mathbf{a}_{jk}
    \end{pmatrix})\sqrt{|\text{det}\begin{pmatrix}
        \mathbf{a}_{ji} & \mathbf{a}_{jk}
    \end{pmatrix}|} \\ &= \mu_k\text{sgn}(\text{det}\begin{pmatrix}
        \mathbf{a}_{ki} & \mathbf{a}_{kj}
    \end{pmatrix})\sqrt{|\text{det}\begin{pmatrix}
        \mathbf{a}_{ki} & \mathbf{a}_{kj}
    \end{pmatrix}|}
\end{aligned}
\end{equation}
Scaling the columns of $A$ by $\tau_1,\ldots,\tau_n \in \mathbb{R}$ and enforcing the constraints leads to the equations
\begin{equation}{\label{eq:determinant-column-scalings}}
    \tau_j\tau_k\text{det}\begin{pmatrix}
        \mathbf{a}_{ij} & \mathbf{a}_{ik}
    \end{pmatrix} = -\tau_i\tau_k\text{det}\begin{pmatrix}
        \mathbf{a}_{ji} & \mathbf{a}_{jk}
    \end{pmatrix} = \tau_i\tau_j\text{det}\begin{pmatrix}
        \mathbf{a}_{ki} & \mathbf{a}_{kj}
    \end{pmatrix}
\end{equation}
for all $1 \leq i < j < k \leq n$. Dividing the first equation above by $\tau_k$ on both sides and the second equation above by $\tau_i$ on both sides, we obtain
\begin{equation}{\label{eq:determinant-scales-columns}}
    \begin{aligned}
        \tau_j\text{det}\begin{pmatrix}
        \mathbf{a}_{ij} & \mathbf{a}_{ik}
    \end{pmatrix} &= -\tau_i\text{det}\begin{pmatrix}
        \mathbf{a}_{ji} & \mathbf{a}_{jk}
    \end{pmatrix} \\
    -\tau_k\text{det}\begin{pmatrix}
        \mathbf{a}_{ji} & \mathbf{a}_{jk}
    \end{pmatrix} &= \tau_j\text{det}\begin{pmatrix}
        \mathbf{a}_{ki} & \mathbf{a}_{kj}
    \end{pmatrix}
    \end{aligned}
\end{equation}
We observe that equations \eqref{eq:determinant-scales-rows} and \eqref{eq:determinant-scales-columns} are linear in $\mu_i$ and $\tau_i$. We can enumerate all determinants into three vectors
\begin{equation}{\label{eq:v-vectors-determinant}}
\begin{aligned}
    \mathbf{v}_1 &= \begin{pmatrix}
\text{det}\begin{pmatrix}
        \mathbf{a}_{ij} & \mathbf{a}_{ik}
    \end{pmatrix}   
\end{pmatrix}_{1 \leq i < j < k \leq n} \\
    \mathbf{v}_2 &= \begin{pmatrix}
-\text{det}\begin{pmatrix}
        \mathbf{a}_{ji} & \mathbf{a}_{jk}
    \end{pmatrix}
\end{pmatrix}_{1 \leq i < j < k \leq n} \\
    \mathbf{v}_3 &= \begin{pmatrix}
\text{det}\begin{pmatrix}
        \mathbf{a}_{ki} & \mathbf{a}_{kj}
    \end{pmatrix}
\end{pmatrix}_{1 \leq i < j < k \leq n}
\end{aligned}
\end{equation}
\noindent each of length ${n \choose 3}$. The determinant constraints are satisfied if and only if $\mathbf{v}_1 = \mathbf{v}_2 = \mathbf{v}_3$, which leads to the following constrained least squares problems: 
\begin{align}{\label{determinant-scales-LS-1}}
 \min_{\|\boldsymbol{\mu}\|_2=1} \|(\boldsymbol{\mu} \;\triangle\; \mathbf{v}_1) - (\boldsymbol{\mu} \;\triangle\; \mathbf{v}_2)\|_2^2 + \|(\boldsymbol{\mu} 
 \;\triangle\; \mathbf{v}_2) - (\boldsymbol{\mu} \;\triangle\; \mathbf{v}_3)\|_2^2
\end{align}
\begin{align}{\label{determinant-scales-LS-2}}
 \min_{\|\boldsymbol{\tau}\|_2=1} \quad \|(\boldsymbol{\tau} \;\triangle_1\; \mathbf{v}_1) - (\boldsymbol{\tau} \;\triangle_1\; \mathbf{v}_2)\|_2^2 + \| (\boldsymbol{\tau} \;\triangle_2\; \mathbf{v}_2) - (\boldsymbol{\tau} \;\triangle_2\; \mathbf{v}_3)\|_2^2
\end{align}
where the quantities in brackets (defined in \eqref{eq:mu-triangle} and \eqref{eq:tau-triangle}) are the corresponding scalings \eqref{eq:determinant-scales-rows} and \eqref{eq:determinant-scales-columns} of $\mathbf{v}_1$, $\mathbf{v}_2$, and $\mathbf{v}_3$ by $\boldsymbol{\mu}$ and $\boldsymbol{\tau}$. The solutions to problems \eqref{determinant-scales-LS-1} and \eqref{determinant-scales-LS-2} are
\begin{equation}{\label{det1-minimization}}
    \min_{\| \boldsymbol{\mu}\|_2=1} \left\|(D_{L,1} + D_{L,2})  \boldsymbol{\mu}\right\|_2^2
\end{equation}
\begin{equation}{\label{det2-minimization}}
    \min_{\|\boldsymbol{\tau}\|_2 =1} \left\|(D_{R,1} + D_{R,2})  \boldsymbol{\tau}\right\|_2^2
\end{equation}
respectively, where $D_{L,1}, D_{L,2}, D_{R,1}, D_{R,2} \in \mathbb{R}^{n \times n}$ are the corresponding least squares matrices. See Appendix \ref{appendix:b} for full detail. The problems \eqref{det1-minimization} and \eqref{det2-minimization} are again solved using SVD.

Now we describe the steps of the Sinkhorn scaling method. Let $r$ denote the iteration number of the procedure. 

\medskip

\noindent \textbf{1. Scale rows:} Let $\boldsymbol{\mu} \in \mathbb{R}^n$ be the solution to
\begin{equation}{\label{all1-minimization}}
    \min_{\|\boldsymbol{\mu}\|_2=1} \left\|(N_L + D_{L,1} + D_{L,2}) \boldsymbol{\mu}\right\|_2^2
\end{equation}
Then we perform the update
\begin{equation}{\label{eq:update-rows-A}}
    A^{(r+\frac{1}{2})} = \frac{\|A^{(r)}\|_F^2}{\|(\text{diag}(\boldsymbol{\mu}) \otimes \mathbf{1}_{2 \times 1})A^{(r)}\|_F^2}(\text{diag}(\boldsymbol{\mu}) \otimes \mathbf{1}_{2 \times 1})A^{(r)}
\end{equation}

\noindent \textbf{2. Scale columns:} Let $\boldsymbol{\tau} \in \mathbb{R}^n$ be the solution to
\begin{equation}{\label{all2-minimization}}
    \min_{\| \boldsymbol{\tau}\|_2=1} \left\|(N_R + D_{R,1} + D_{R,2})  \boldsymbol{\tau}\right\|_2^2
\end{equation}
Then we perform the update
\begin{equation}{\label{eq:update-columns-A}}
    A^{(r+1)} = \frac{\|A^{(r+\frac{1}{2})}\|_F^2}{\|A^{(r+\frac{1}{2})}\text{diag}(\boldsymbol{\tau})\|_F^2}A^{(r+\frac{1}{2})}\text{diag}(\boldsymbol{\tau})
\end{equation}
The Sinkhorn scaling procedure is detailed in Algorithm \ref{alg:sinkhorn}.

\begin{algorithm}
\caption{Sinkhorn scaling for quadratic constraint satisfaction}{\label{alg:sinkhorn}}

\begin{algorithmic}[1]
\Input
{$A \in \mathbb{R}^{2n \times n}$, the output of \textsc{IRLS-ADMM}}
\Output
{$A^\prime \in \mathbb{R}^{2n \times n}$, a pure common lines matrix}
\Procedure{\textsc{Sinkhorn}}{$A$}
\State $r \gets 0$
\While{not converged}
\State set $N_L,D_{L,1},D_{L,2}$ using \eqref{eq:Nl-solution}, \eqref{eq:DL1-solution}, \eqref{eq:DL2-solution}
\State set $\boldsymbol{\mu}$ to be the solution of \eqref{all1-minimization}
\State update $A$ using \eqref{eq:update-rows-A} \Comment{1. Scale rows}
\State set $N_R,D_{R,1},D_{R,2}$ using \eqref{eq:Nr-solution}, \eqref{eq:DR1-solution}, \eqref{eq:DR2-solution}
\State set $\boldsymbol{\tau}$ to be the solution of \eqref{all2-minimization}
\State update $A$ using \eqref{eq:update-columns-A} \Comment{2. Scale columns}
\State $r \gets r + 1$
\EndWhile
\State $A^\prime \gets A$
\EndProcedure
\end{algorithmic}
\end{algorithm}

\subsection{Rotation recovery}{\label{sec:rot-recovery}}

\textsc{IRLS-ADMM} and \textsc{Sinkhorn} aim to output a pure common lines matrix $A \in \mathbb{R}^{2n \times n}$. 
Given a pure common lines matrix, we now show how to determine the underlying rotations $R^{(i)}$ in \eqref{eq:restriction} which generated $A$ (recall \Cref{thm:unique-rotations}).

Given $A$, use singular value decomposition to compute a rank-$3$ factorization $A = B C^{\top}$ for $B \in \mathbb{R}^{2n \times 3}$ and $C \in \mathbb{R}^{n \times 3}$.
We then seek an invertible matrix $Q \in \mathbb{R}^{3 \times 3}$ 
so that $BQ$ and $CQ^{- \top}$ take the form of the factors in \eqref{eq:A-factorization}.
In particular, it is required that the rows of $BQ$ come in orthonormal pairs; more precisely, $(BQ)(BQ)^{\top} = B QQ^{\top} B^{\top} \in \mathbb{R}^{2n \times 2n}$ should have $2 \times 2$ identities along its diagonal.
Setting $X = QQ^{\top} \in \mathbb{R}^{3 \times 3}$ and relaxing positive semidefiniteness, let us consider the following affine-linear least squares problem:
\begin{equation}\label{eq:my-least-squares}
\min_{\substack{X \in \mathbb{R}^{3 \times 3} \\ X = X^{\top}}} \| \mathcal{L}(B X B^{\top}) \|_F^2,
\end{equation}
where $\mathcal{L} : \mathbb{R}^{2n \times 2n} \rightarrow \mathbb{R}^{2n \times 2n}$ denotes the affine-linear operator which sets all entries of a $2n \times 2n$ matrix outside of the $2 \times 2$ diagonal blocks to 0, and subtracts the $2 \times 2$ identity matrices from the diagonal blocks. 
The normal equations for \eqref{eq:my-least-squares} may be written as
\begin{equation}{\label{eq:rotation-least-squares}}
    L\operatorname{vec}(X) = \operatorname{vec}(B^\top B),
\end{equation}
where $L \in \mathbb{R}^{9 \times 9}$, whose rows and columns index the variables $X_{ij}$ for $1 \leq i \leq j \leq 3$, and whose corresponding $(ij,k\ell)$-th entry is
\begin{equation}{\label{eq:L-matrix-rotation-recovery}}
    (L)_{ij,k\ell} = \langle (I_{n \times n} \otimes \mathbf{1}_{1 \times 2})(\mathbf{b}_i \odot \mathbf{b}_\ell), (I_{n \times n} \otimes \mathbf{1}_{1 \times 2})(\mathbf{b}_j \odot \mathbf{b}_k) \rangle
\end{equation}
where $\mathbf{b}_i$ is the $i$-th column of $B$.
Generically there is a unique symmetric matrix $X$ solving \eqref{eq:rotation-least-squares}.

Let $X = V D V^\top$ be an eigendecomposition for the solution to \eqref{eq:my-least-squares}, where $V, D \in \mathbb{R}^{9 \times 9}$.  In the clean case, the diagonal matrix $D$ is already positive semidefinite. 
Then $V D^{1/2} \in \mathbb{R}^{3 \times 3}$ is a candidate for $Q$. 
Next the $i$-th row block of $BQ$ determines the first two rows of $R^{(i)}$, and the cross product of these two rows computes the third row of $R^{(i)}$.
From this, we recover the $n$-tuple of rotations $(R^{(1)}, \ldots, R^{(n)})$ up to global right-multiplication by a rotation.
The full procedure for rotation recovery is in Algorithm \ref{alg:rot-recovery}, which is formulated to apply to noisy inputs as well.
We note that as explained in Remark \ref{rem:careful}, a pure common lines matrix $A$ can only be recovered up to sign, because there are two distinct $n$-tuples of rotations that can be recovered corresponding to the chiral ambiguity of common lines data.
One can run \textsc{Rotations} twice, on $A$ and $-A$, to produce the two possible sets of rotations.

\begin{algorithm}
\caption{Rotation recovery}{\label{alg:rot-recovery}}

\begin{algorithmic}[1]
\Input
{$A \in \mathbb{R}^{2n \times n}$, an estimate for pure common lines matrix}
\Output
{$(R_1,\ldots,R_n) \in \operatorname{SO}(3)^n$, rotations determining the pure common lines matrix}
\Procedure{\textsc{Rotations}}{$A$}
\State set $B$ using the SVD to get a rank-3 approximation $BC^\top$ for $A$
\State set $L$ using \eqref{eq:L-matrix-rotation-recovery}
\State set $X$ to be least squares solution of \eqref{eq:rotation-least-squares} 
\State set $V,D$ using the eigendecomposition $X = V D V^\top$ 
\State $D_{ii} \gets \max(D_{ii}, 0)$
\For{$i = 1,\ldots,n$}
\State set $\mathbf{q}_j^\top$ to be the $(2i-2+j)$-th row of $BVD^{1/2}$ for $j = 1,2$
\State set $\mathbf{q}_3^{\top}$ to be the $i$-th row of $C(V D^{1/2})^{-\top}$
\State set $R_i \in \mathbb{R}^{3 \times 3}$ to be $\begin{pmatrix} \mathbf{q}_2^{\top} \\ - \mathbf{q}_1^{\top} \\ \mathbf{q}_3^{\top} \end{pmatrix}$
\EndFor
\If{$\det(R_i) < 0$}
\For{$i = 1,\ldots,n$}
\State $R_i \gets -R_i$
\EndFor
\EndIf
\For{$i = 1,\ldots,n$}
\State replace $R_i$ by the nearest rotation matrix to $R_i$ using the SVD of $R_i$
\EndFor
\EndProcedure
\end{algorithmic}
\end{algorithm}

\subsection{Justification of algorithms}{\label{sec:correctness}}

Suppose the input to \textsc{IRLS-ADMM} is a noiseless common lines matrix $\widehat{A} \in \mathbb{R}^{2n \times n}$, and its output $A \in \mathbb{R}^{2n \times n}$ is a global minimizer to the non-convex problem \eqref{eq:admm}. 
In the next theorem we show that we can recover the ground-truth pure common lines matrix, up to a global scale, by scaling the $2 \times 1$ blocks of $A$ via \textsc{IRLS-ADMM} and enforcing the quadratic constraints via \textsc{Sinkhorn}. The theorem justifies using \textsc{IRLS-ADMM} and \textsc{Sinkhorn}.

\medskip

\begin{theorem}{\label{thm:row-column-scales}}
Let $n \geq 4$.
Let $A \in \mathbb{R}^{2n \times n}$ be a generic pure common lines matrix and $\lambda_{ij} \in \mathbb{R}$ be nonzero scales for $i,j = 1,\ldots,n$ with $i \neq j$. Let $\Lambda \in \mathbb{R}^{n \times n}$ where $\Lambda_{ij} = \lambda_{ij}$ if $i \neq j$, $\Lambda_{ii} = 0$, and $\Lambda = \Lambda^\top$. Suppose
\begin{equation*}
    B := (\Lambda \otimes \mathbf{1}_{2 \times 1}) \odot A = \begin{pmatrix}
\mathbf{0} & \lambda_{12}\mathbf{a}_{12} & \ldots & \lambda_{1,n-1}\mathbf{a}_{1,n-1} & \lambda_{1,n}\mathbf{a}_{1,n}  \\
\lambda_{21}\mathbf{a}_{21} & \mathbf{0} & \ldots & \lambda_{2,n-1}\mathbf{a}_{2,n-1} &  \lambda_{2,n}\mathbf{a}_{2,n}  \\
\vdots & \vdots & \ddots & \vdots & \vdots \\
\lambda_{n-1,1}\mathbf{a}_{n-1,1} & \lambda_{n-1,2}\mathbf{a}_{n-1,2} & \ldots & \mathbf{0} & \lambda_{n-1,n}\mathbf{a}_{n-1,n}  \\
\lambda_{n,1}\mathbf{a}_{n,1} & \lambda_{n,2}\mathbf{a}_{n,2} & \ldots & \lambda_{n,n-1}\mathbf{a}_{n,n-1} & \mathbf{0}
\end{pmatrix}
\end{equation*}
has rank 3. Then there exist $\mu_1,\ldots,\mu_n,\tau_1,\ldots,\tau_n\in \mathbb{R}$ such that
\begin{equation}{\label{eq:row-column-scale}}
\begin{pmatrix}
0 & \lambda_{12} & \ldots & \lambda_{1,n-1} & \lambda_{1,n}  \\
\lambda_{21} & 0 & \ldots & \lambda_{2,n-1} &  \lambda_{2,n}  \\
\vdots & \vdots & \ddots & \vdots & \vdots \\
\lambda_{n-1,1} & \lambda_{n-1,2} & \ldots & 0 & \lambda_{n-1,n}  \\
\lambda_{n,1} & \lambda_{n,2} & \ldots & \lambda_{n,n-1} & 0
\end{pmatrix} = \begin{pmatrix}
0 & \mu_1\tau_1 & \ldots & \mu_1\tau_{n-1} & \mu_1\tau_n  \\
\mu_2\tau_1 & 0 & \ldots & \mu_2\tau_{n-1} &  \mu_2\tau_{n-1}  \\
\vdots & \vdots & \ddots & \vdots & \vdots \\
\mu_{n-1}\tau_1 & \mu_{n-1}\tau_2 & \ldots & 0 & \mu_{n-1}\tau_n  \\
\mu_n\tau_1 & \mu_n\tau_2 & \ldots & \mu_n\tau_{n-1} & 0
\end{pmatrix}
\end{equation}
If $B$ additionally satisfies the quadratic constraints \eqref{eq:opt-const-2}, then there exists $\tau \in \mathbb{R}$ such that for all $i,j = 1,\ldots,n$ ($i \neq j$) it holds $\lambda_{ij} = \tau$.
\end{theorem}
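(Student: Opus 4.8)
The plan is to establish the two parts of the statement in turn: (i) if $\operatorname{rank}(B)=3$ then $\Lambda$ is rank one off the diagonal, i.e.\ $\lambda_{ij}=\mu_i\tau_j$; and (ii) if in addition $B$ satisfies the quadratic constraints \eqref{eq:opt-const-2}, then $\lambda_{ij}$ is a constant. For part (i) I would use the factorization $A=PC^\top$ of \eqref{eq:A-factorization}: write $P$ in $2\times3$ blocks $P_i=\bigl(\begin{smallmatrix}-\mathbf r_2^{(i)\top}\\ \mathbf r_1^{(i)\top}\end{smallmatrix}\bigr)$ and put $\mathbf c_j:=\mathbf r_3^{(j)}$, so $\mathbf a_{ij}=P_i\mathbf c_j$ and $\ker P_i=\operatorname{span}(\mathbf c_i)$. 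First reduce to $n=4$ (the hypothesis $n\ge4$ is needed, since for $n=3$ rank $3$ of $B$ imposes nothing on $\Lambda$): for every $4$-element subset $S\subseteq\{1,\dots,n\}$, the $8\times4$ submatrix $B_S$ of $B$ has rank $\le3$ and hence a nonzero right null vector $\mathbf x\in\mathbb R^4$. Reading $B_S\mathbf x=0$ one block-row at a time gives $P_i\!\left(\sum_{j\in S}x_j\lambda_{ij}\mathbf c_j\right)=0$, so $\sum_{j\in S}x_j\lambda_{ij}\mathbf c_j\in\operatorname{span}(\mathbf c_i)$ for every $i\in S$. For generic rotations, $\{\mathbf c_j\}_{j\in S}$ spans $\mathbb R^3$ with a unique-up-to-scale relation $\sum_{j\in S}\gamma_j\mathbf c_j=0$ having all $\gamma_j\neq0$; comparing with the previous membership forces $x_j\lambda_{ij}=\beta_i\gamma_j$ for all $i\neq j$ in $S$ and suitable scalars $\beta_i$. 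Since the $\lambda_{ij}$ are nonzero this forces every $x_j\neq0$ (a zero $x_{j_0}$ would make some $\beta_i=0$ and then $\lambda_{ij_1}=0$) and rules out two independent null vectors (they would be forced proportional); hence $\operatorname{rank}(B_S)=3$ and $\lambda_{ij}=\beta_i(\gamma_j/x_j)$ has rank-one form on $S$.

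Still in part (i), I would then glue: the rank-one parametrizations produced on two $4$-subsets sharing at least three indices agree up to the standard rescaling $\mu\mapsto t\mu,\ \tau\mapsto t^{-1}\tau$ (checked directly from the six overlap equations), and the graph on $4$-subsets with an edge whenever they share three indices is connected for $n\ge4$; so the local data assemble to global nonzero scalars $\mu_1,\dots,\mu_n,\tau_1,\dots,\tau_n$ with $\lambda_{ij}=\mu_i\tau_j$. For part (ii), first use symmetry: from $\mu_i\tau_j=\lambda_{ij}=\lambda_{ji}=\mu_j\tau_i$ the ratio $\mu_i/\tau_i$ is independent of $i$, so after absorbing it we may write $\lambda_{ij}=c\,\nu_i\nu_j$ with $c\neq0$ and all $\nu_i\neq0$. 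The norm equalities in \eqref{eq:opt-const-2} hold automatically, since $\|\mathbf b_{ij}\|_2^2=\lambda_{ij}^2\|\mathbf a_{ij}\|_2^2=\lambda_{ji}^2\|\mathbf a_{ji}\|_2^2=\|\mathbf b_{ji}\|_2^2$ by \Cref{prop:length}, so they are vacuous; the determinant equalities do the work. With $\mathbf b_{ij}=c\nu_i\nu_j\mathbf a_{ij}$ one computes $\det(\mathbf b_{ij},\mathbf b_{ik})=c^2\nu_i^2\nu_j\nu_k\det(\mathbf a_{ij},\mathbf a_{ik})$ and similarly for the other two $2\times2$ determinants, so the constraint \eqref{eq:opt-const-2} for $B$ together with the same constraint for the pure matrix $A$ (\Cref{prop:determinant}) gives $\nu_i^2\nu_j\nu_k\,d_{ijk}=\nu_j^2\nu_i\nu_k\,d_{ijk}=\nu_k^2\nu_i\nu_j\,d_{ijk}$, where $d_{ijk}:=\det(\mathbf a_{ij},\mathbf a_{ik})$. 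Since $d_{ijk}\neq0$ for generic $A$ and all $\nu_\ell\neq0$, this yields $\nu_i=\nu_j=\nu_k$ for every triple, hence all $\nu_\ell$ are equal and $\lambda_{ij}=c\nu^2=:\tau$ is constant (possibly negative, consistent with \Cref{rem:careful}).

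The crux is part (i), and specifically making the reduction to $n=4$ watertight: one must record that the finitely many genericity conditions invoked ($\operatorname{rank}(A_S)=3$ and $\gamma_j^{(S)}\neq0$ for every $4$-subset $S$, and $d_{ijk}\neq0$ for every triple) each fail only on a proper subvariety of $\operatorname{SO}(3)^n$, hence hold simultaneously for generic rotations; and one must verify the two structural facts that drive the argument — a null vector of $B_S$ has no zero coordinate, and $B_S$ has nullity exactly $1$ — both of which I expect to fall out of the single block computation above once nonvanishing of the $\lambda_{ij}$ is used. Part (ii) is, by contrast, a short and essentially formal manipulation of $2\times2$ determinants once the rank-one structure of $\Lambda$ is in hand.
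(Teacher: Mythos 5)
Your proof is correct, and for the main (rank) part it takes a genuinely different route from the paper. The paper first normalizes $\lambda_{1j}=\lambda_{i1}=1$ by row/column scaling and then expands specific vanishing $4\times4$ minors of $B$ into products of $2\times2$ determinants, invoking the determinant identities of \Cref{prop:determinant} on the pure matrix $A$ to cancel terms and conclude directly that all remaining off-diagonal entries of $\Lambda$ share a common value; this yields the explicit form of $\Lambda$ in one pass. You instead exploit the factorization $A=PC^\top$ of \eqref{eq:A-factorization}: a right null vector of each $8\times4$ block $B_S$ is compared against the unique linear relation among four generic vectors $\mathbf r_3^{(j)}$ in $\mathbb{R}^3$, forcing $\lambda_{ij}=\beta_i\gamma_j/x_j$ on $S$, followed by a gluing argument over overlapping $4$-subsets. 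Your route is more structural (it isolates where the rank hypothesis acts, namely on $\ker P_i=\operatorname{span}(\mathbf r_3^{(i)})$) and notably does not need \Cref{prop:determinant} at all in part (i), whereas the paper's minor expansion leans on it; the price is the gluing step, which you only sketch --- it does go through (normalize each $\{1,2,3,j\}$ against $\{1,2,3,4\}$, then verify pairs $i,j>4$ via $\{1,2,i,j\}$), but it is the one place where details must be written out, and the paper's normalization trick avoids it entirely. Your part (ii) coincides with the paper's argument, and is in fact slightly cleaner: the paper asserts that symmetry of $\boldsymbol{\mu}\boldsymbol{\tau}^\top$ forces $\boldsymbol{\mu}=\boldsymbol{\tau}$, which is only true up to a scalar, while your formulation $\lambda_{ij}=c\nu_i\nu_j$ handles that scalar (and the possible sign of $\tau$, consistent with \Cref{rem:careful}) correctly; your observation that the norm constraints in \eqref{eq:opt-const-2} are automatic given $\Lambda=\Lambda^\top$ and \Cref{prop:length} is also right.
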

\begin{proof}
See Appendix \ref{appendix:a}.
\end{proof}

\medskip

\begin{remark}{\label{rem:row-signs}}
Theorem \ref{thm:unique-rotations} implies that given a pure common lines matrix, the rotation recovery problem in Section \ref{sec:rot-recovery} is uniquely solvable up to a global rotation.
\Cref{thm:row-column-scales} states that the ground-truth pure common lines matrix can only be determined up to a global scale; in particular, $\tau$ in \Cref{thm:row-column-scales} may be positive or negative. As in Remark \ref{rem:careful}, this sign flip corresponds to chiral ambiguity in cryo-EM.  Apart from its sign, the global scale has no effect on rotation recovery in \Cref{alg:rot-recovery}.
\end{remark}

\section{Application: Clustering heterogeneous common lines}{\label{sec:heterogeneity-application}}

\begin{figure}[!ht]
\centering
\includegraphics[width=0.6\textwidth]{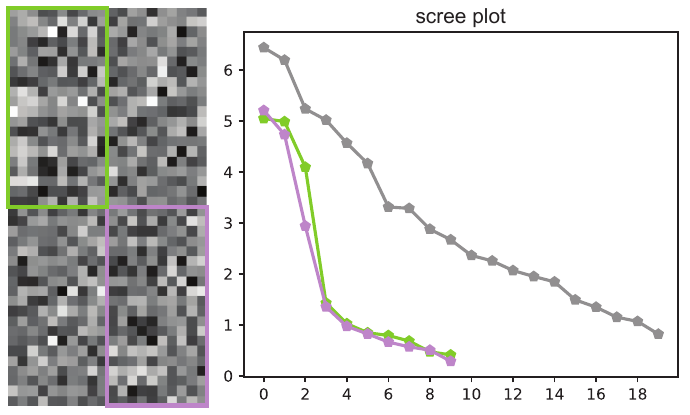}
\vspace{-0.5em}
{\caption{A heterogeneous common lines matrix and rank test for simulated rotations from two distinct molecules. Block-diagonals comparing rotations from the same molecule show rank-3 structure}
\label{fig-svd}}
\end{figure}

Here we present an application of our approach for common lines to a challenging problem in cryo-EM.   
We propose a clustering algorithm for detecting homogeneous communities of consistent common lines from discretely heterogeneous data, using our algebraic constraints.  
One can then use the clusters of common lines for rotation recovery and 3D reconstruction.

Several successful methods have been proposed for clustering heterogeneous cryo-EM data that consist of images of a \textit{single} macromolecule with conformational landscapes or differences in subunits \cite{AIZENBUD2019,HERMAN2008,SHATSKY2010,PENCZEK2006,LIAO2015,SCHERES_2012, Katsevich_2015}.  
Recent work of the third author 
 \cite{verbeke_separating_2020} proposed a method of solving a different heterogeneity challenge in cryo-EM, where the heterogeneity in the data comes from \textit{multiple} distinct macromolecules  rather than variations on one primary structure. Our proposed application focuses on the latter problem.  For this setting of heterogeneity, the main prior work to compare against is \cite{verbeke_separating_2020}.

The basic idea of our approach is illustrated in Figure \ref{fig-svd}. There the matrix in grey is a matrix of common lines from simulated heterogeneous data, corresponding to two distinct molecular conformations.  
The two homogeneous common lines matrices are diagonal blocks in green and purple. The $2 \times 1$ entries outside of these diagonal blocks do not correspond to any consistent lines and are just random lines in $\mathbb{R}^2$, encoded as representatives. 
Note that in general, a heterogeneous common lines matrix will not necessarily have consistent common lines matrices as diagonal blocks, but will be a $2 \times n$ row and $n \times 1$ column permutation of such a matrix. Gaussian white noise has also been added to the grey matrix to decrease the signal-to-noise ratio of the common lines. A scree plot in Figure \ref{fig-svd} shows a noticeable spectral gap between the third and fourth singular values of the homogeneous common lines matrices (green and purple curve), which is not detectable for the entire heterogeneous common lines matrix (grey curve), thus demonstrating low-rank structure of submatrices.

\begin{figure}[H]
\includegraphics[width=1\textwidth]{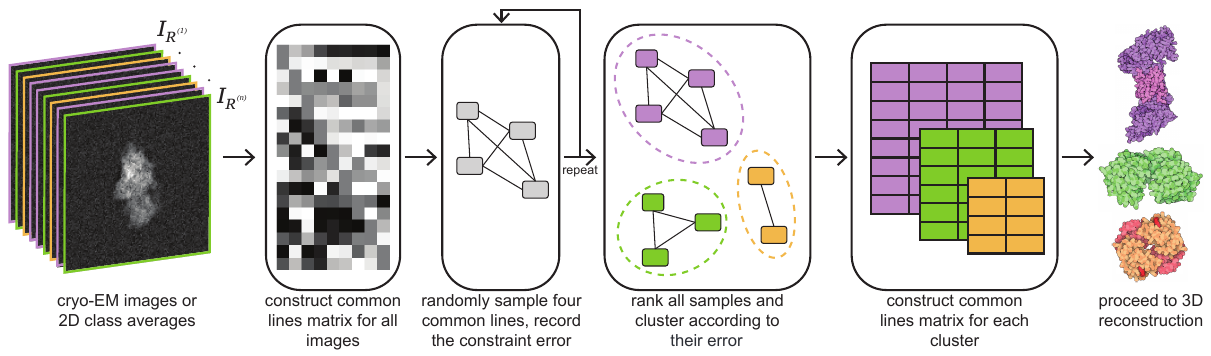}
{\caption{Algorithm for separating images of distinct molecules using algebraic constraints on common lines. The common lines matrix is first computed from an input set of images or class averages. We then apply Algorithm \ref{alg:clustering}, our clustering algorithm. After clustering, images corresponding to the same molecule can then be used for 3D reconstruction}
\label{fig:scheme}}
\end{figure}

\begin{algorithm}[!ht]
\caption{Heterogeneous clustering on common line constraints}
\label{alg:clustering}
\begin{algorithmic}[1]
\Input
{$A \in \mathbb{R}^{2n \times n}$, a common lines matrix}
\Output
$C_1, \ldots, C_r \subseteq \{1,\ldots,n\}$, a partition of all common lines into consistent clusters
\Procedure{\textsc{Clusters}}{$A$}
\State $L \gets \emptyset$ \Comment{1. Generate samples}
\For {$i \geq 1$ until sufficient} 
\State set $S \subseteq \{1,\ldots,n\}$ to be a random sample such that $|S| = 4$
\State set $A_S$ to be the submatrix of $A$ associated to the common lines in $S$
\State $A_S \gets \textsc{IRLS-ADMM}(A_S)$
\State $A_S \gets \textsc{Sinkhorn}(A_S)$
\If{converged}
\State set $M$ using \eqref{eq:M-matrix-symmetry} on $A_S$
\State set $\mathbf{v}_1,\mathbf{v}_2,\mathbf{v}_3$ using \eqref{eq:v-vectors-determinant} on $A_S$
\State $e \gets \|M - M^{\top}\|_F^2 + \|\mathbf{v}_1 - \mathbf{v}_2\|_2^2 + \|\mathbf{v}_2 - \mathbf{v}_3\|_2^2$
\State $L \gets \text{append}(L,(S,e))$
\EndIf
\EndFor
\State sort $L$ by increasing values of $e$ \Comment{2. Cluster samples}
\State $G \gets \boldsymbol{0}_{n \times n}$ \While {$L \neq \emptyset$} 
\State $(S,e) \gets \text{pop}(L)$
\State $G_{S,S} \gets \max\{G_{S,S},-\log(e)\cdot\boldsymbol{1}_{4 \times 4}\}$
\EndWhile
\State $G_{ii} \gets 0$ for all $i = 1,\ldots,n$
\State $C_1,\ldots,C_r \gets \textsc{CommunityDetection}(G)$ \Comment{Use method from~\cite{Lancichinetti_2009}}
\EndProcedure
\end{algorithmic}
\end{algorithm}

Our clustering algorithm consists of two main steps:

\medskip

\noindent \textbf{1. Generate samples:} As input, we are given a single common lines matrix $A$, from which we identify small clusters of consistent common lines. We randomly sample a set of four common lines $S$ and extract the corresponding $8 \times 4$ submatrix $A_S$ from $A$ (the choice of four common lines is explained in Section \ref{sec:clustering-heterogeneous} based on numerical experiments). We then run \textsc{IRLS-ADMM} and \textsc{Sinkhorn} on $A_S$. These methods may occasionally diverge due to numerical instability from noise in the data, as discussed in Section \ref{sec:experiments}, in which case we discard the sample. We also discard the sample if the spectral gap between the third and fourth singular value of $A_S$ is not sufficiently large (i.e., $A_S$ does not have numerical rank 3). Otherwise, we obtain a quadratic constraint satisfaction error for the sample. We record both the sample and its error, and repeat this sampling sufficiently many times.

\medskip

\noindent \textbf{2. Cluster samples:} We view the collection of samples and errors we obtain as a weighted hypergraph on $n$ vertices whose hyperedges are of size 4 corresponding to the samples and whose hyperedge weights are given by their corresponding errors.  We convert the weighted hypergraph into a weighted graph by constructing a weighted adjacency matrix whose $(i,j)$ entry is the negative logarithm of the smallest error on a hyperedge containing both common lines $i$ and $j$. We then use an unsupervised community detection algorithm on this adjacency matrix to find the clusters of consistent common lines. In our numerical experiments, we use the algorithm proposed by Lancichinetti,
Fortunato, and Kertész~\cite{Lancichinetti_2009}, which can identify overlapping communities and hierarchical structure, and depends only on a single hyperparamter controlling the scale of the hierarchies. In particular, we do not need to specify the number of clusters or their sizes.

\medskip

The clustering algorithm is detailed in Algorithm \ref{alg:clustering} and illustrated in Figure \ref{fig:scheme}.

\section{Performance on data} \label{sec:experiments}

We compare the performance of our methods to existing common lines based algorithms in the literature, namely functions in the software package ASPIRE~\cite{garrett_wright_2023_8321443} and the clustering algorithm of Verbeke et. al.~\cite{verbeke_separating_2020}. We study the problems of recovering rotations, denoising common lines, and partitioning discretely heterogeneous image sets into homogeneous subcommunities using common lines.  The tests are done on simulated data (at various levels of noise) and real data.

Our simulated data consists of image data of the 40S, 60S and 80S ribosome (available from the Electron Microscopy Data Bank~\cite{Lawson2015-rz} as entries EMD-4214, EMD-2811, and EMD-2858, respectively), generated by ASPIRE. The ribosomes and examples of their clean 2D projection images are displayed in Figure \ref{fig:simulation}. Each image is $128 \times 128$ with a pixel size of 3\r{A}. White Gaussian noise is added to each image corresponding to a specified signal-to-noise ratio (SNR). We define the SNR by taking the signal to be the average squared intensity over each pixel in the clean image and setting the noise variance to achieve the appropriate ratio. The common lines between two images are detected by finding the line projections of the two images with the highest correlation, as computed in ASPIRE.

\begin{figure}[!ht]
{\includegraphics[width=1\textwidth]{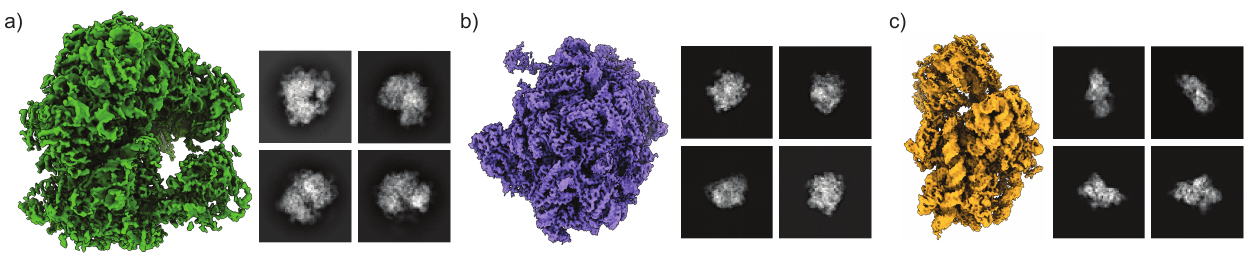}}
{\caption{3-D structures and example projection images for the three structures used for simulation. \textbf{(a)} 80S ribosome (EMD-2858) and example projection images. \textbf{(b)} 60S ribosome (EMD-2811) and example projection images. \textbf{(c)} 40S ribosome (EMD-4214) and example projection images}
\label{fig:simulation}}
\end{figure}

In our numerical experiments, we have observed that the performance of \textsc{IRLS-ADMM}, and consequently \textsc{Sinkhorn}, can depend on its initialization. In particular, we have occasionally observed divergence or vanishing of the entries of $A$ in \textsc{Sinkhorn}. This behavior appears to be due to  numerical instabilities in  \textsc{Sinkhorn} arising from noise in the data or from using too many common lines. In these cases, we can either discard such runs and restart the algorithms with new initializations, or skip using \textsc{Sinkhorn}. We address this issue in each of our tests.

\subsection{Rotation recovery}{\label{sec:rotation-recovery-tests}}

Let $R^{(1)},\ldots,R^{(n)} \in \operatorname{SO}(3)$ be the ground-truth rotations and $R_1,\ldots,R_n \in \operatorname{SO}(3)$ be the recovered rotations. Then Theorem \ref{thm:unique-rotations} states that there exists a unique rotation $Q \in \operatorname{SO}(3)$ such that $R^{(i)} = R_iQ$ for all $1 \leq i \leq n$. In other words, $Q$ can be found by solving the orthogonal Procrustes problem
\begin{equation}{\label{eq:squared-procrustes-test}}
    \min_{Q \in \operatorname{SO}(3)} \frac{1}{n} \left\|\begin{pmatrix}
        R^{(1)} \\
        \vdots \\
        R^{(n)}
    \end{pmatrix} - \begin{pmatrix}
        R_1 \\
        \vdots \\
        R_n
    \end{pmatrix}Q\right\|_F^2
\end{equation}
The solution to this problem can be found using SVD \cite{Schönemann1966}. We note that for $n = 1$, there is a simple relation between the Procrustes error and the angular error between two rotation matrices.  For $R,S \in SO(3)$, it holds 
$ \|R - S\|_F^2 = \langle R - S, R - S\rangle = \|R^\top R\|_F^2 - 2\langle R, S\rangle + \|S^\top S\|_F^2 
    = \|I_{3 \times 3}\|_F^2 - 2\langle R, S\rangle + \|I_{3 \times 3}\|_F^2 = 2\cdot 3 - 2\cdot \text{tr}(R^\top S)$.
Hence the angular error between $R$ and $S$ is
\begin{align*}{\label{eq:average-angular-err}}
    \theta &= \text{arccos}\Big{(}\frac{\text{tr}(R^\top S) - 1}{2}\Big{)} = \text{arccos}\Big{(}\frac{3 - \frac{1}{2}\|R - S\|_F^2 - 1}{2}\Big{)}.
\end{align*}

We compare our method to the procedure in ASPIRE on simulated data. Given common lines, the rotation recovery algorithm used in ASPIRE is the synchronization with voting procedure as described in \cite{shkolniskySinger1}. We use 30 images of the 60S, 80S, and 40S ribosomes at $\text{SNR} = 0.125,0.25,0.5,1$. For each macromolecule and SNR, we generate 50 sets of 30 random ground-truth rotations and their corresponding noisy images. We report the average Procrustes error \eqref{eq:squared-procrustes-test} per image (i.e., the Procrustes error divided by the number of images).

When running this test, we chose to only use our \textsc{IRLS-ADMM} algorithm followed by \textsc{Rotations}, and omitted the \textsc{Sinkhorn} row and column scaling step due to observed numerical instabilities of \textsc{Sinkhorn} with noise in the data or too many common lines.
Also, Remark \ref{rem:row-signs} states that our algorithm is only guaranteed to recover the ground-truth pure common lines matrix up to a global scale, and the sign of this scale produces two sets of rotations that differ by left-multiplication with $J = \text{diag}(-1,-1,1)$. This sign flip corresponds to the chiral ambiguity in cryo-EM, as explained in Remark \ref{rem:careful}. Thus in our tests we report the best rotational error amongst the two possible sets of rotations.

The results for rotation recovery are displayed in Table \ref{table:rotation-test}. Notably, at lower SNR our method is consistently more accurate than ASPIRE.

\subsection{Denoising common lines}{\label{sec:denoising-tests}}

The problem we consider here is the following: given a \textit{noisy} common lines matrix, how well can we recover the ground-truth clean pure common lines matrix?  If $A,B \in \mathbb{R}^{2n \times n}$ are the ground-truth and recovered pure common lines matrix respectively, then we  measure this error to be
\begin{equation}{\label{eq:denoising-err}}
    \min_{\lambda \in \mathbb{R}} \frac{1}{n}\left\|A - \lambda B\right\|_F^2
\end{equation}
since there is a global scale ambiguity in the recovered pure common lines matrix as discussed in Remark \ref{rem:row-signs}. The above problem is a least-squares problem in $\lambda$ and hence has a closed-form solution.

We compare our methods to ASPIRE on simulated data as follows. We run the rotation recovery algorithm of ASPIRE based on common lines to obtain rotations $R^{(1)},\ldots,R^{(n)} \in SO(3)$. Then we construct a recovered pure common lines matrix $B$ from these rotations by using the factorization \eqref{eq:A-factorization}. We then compare the denoising error \eqref{eq:denoising-err} from the ground-truth clean common lines matrix $A$ to the output of our \textsc{IRLS-ADMM} method and to the matrix $B$. As before, the simulated data consists of 30 images of the 60S, 80S, and 40S ribosomes at $\text{SNR} = 0.125,0.25,0.5,1$, and we report the average denoising error \eqref{eq:denoising-err} per image over 50 runs. We assume we have the ground-truth chiral information when recovering rotations with ASPIRE.

The results for common line denoising are displayed in Table \ref{table:denoising-table}.  Again our \textsc{IRLS-ADMM} method outperforms ASPIRE for denoising common lines matrices, particularly at low SNR.

\begin{table}[h]
\centering
\renewcommand{\arraystretch}{1.25}
\begin{tabular}{|c|c|c|c|}
\hhline{|-|-|-|-|}
\multirow{3}{*}{macromolecule} & \multirow{3}{*}{SNR} & \multicolumn{2}{c|}{average Procrustes rotation error \eqref{eq:squared-procrustes-test}} \\
\hhline{|~|~|-|-|}
& & \makecell[c]{\Cref{alg:irls-admm} \& \\\Cref{alg:rot-recovery}} & \multicolumn{1}{c|}{ASPIRE} \\
\hhline{:=:=:=:=:}
\multirow{4}{*}{\shortstack[c]{EMD-2811 \\ (60S ribosome)}} & 0.125 & \textbf{3.6840} & 4.3426 \\
\hhline{|~|-|-|-|}
& 0.25 & \textbf{3.3215} & 3.6042 \\
\hhline{|~|-|-|-|}
& 0.5 & \textbf{1.5512} & 2.2704 \\
\hhline{|~|-|-|-|}
& 1 & \textbf{0.4215} & 0.4877 \\
\hhline{:=:=:=:=:}
\multirow{4}{*}{\shortstack[c]{EMD-2858 \\ (80S ribosome)}} & 0.125 & \textbf{1.7903} & 2.5943 \\
\hhline{|~|-|-|-|}
& 0.25 & \textbf{0.8038} & 1.3175 \\
\hhline{|~|-|-|-|}
& 0.5 & \textbf{0.1576} & 0.2543 \\
\hhline{|~|-|-|-|}
& 1 & 0.0259 & \textbf{0.0240} \\
\hhline{:=:=:=:=:}
\multirow{4}{*}{\shortstack[c]{EMD-4214 \\ (40S ribosome)}} & 0.125 & \textbf{4.0377} & 4.5281 \\
\hhline{|~|-|-|-|}
& 0.25 & \textbf{3.9079} & 4.0104 \\
\hhline{|~|-|-|-|}
& 0.5 & 3.6375 & \textbf{3.3994} \\
\hhline{|~|-|-|-|}
& 1 & 2.8271 & \textbf{2.1334} \\
\hhline{|-|-|-|-|}
\end{tabular}
\medskip
\caption{The average rotation recovery error from 30 simulated images  of macromolecules \newline at various SNR, 50 runs each. Bold values indicate the algorithm with lower error.}
\label{table:rotation-test}
\end{table}

\begin{table}[h]
\centering
\renewcommand{\arraystretch}{1.25}
\begin{tabular}{|c|c|c|c|}
\hhline{|-|-|-|-|}
\multirow{2}{*}{macromolecule} & \multirow{2}{*}{SNR} & \multicolumn{2}{c|}{average denoising error \eqref{eq:denoising-err}} \\
\hhline{|~|~|-|-|}
& & \makecell[c]{\Cref{alg:irls-admm}} & \multicolumn{1}{c|}{ASPIRE} \\
\hhline{:=:=:=:=:}
\multirow{4}{*}{\shortstack[c]{EMD-2811 \\ (60S ribosome)}} & 0.125 & \textbf{14.1982} & 18.2274 \\
\hhline{|~|-|-|-|}
& 0.25 & \textbf{12.7786} & 17.8805 \\
\hhline{|~|-|-|-|}
& 0.5 & \textbf{8.2349} & 16.8199 \\
\hhline{|~|-|-|-|}
& 1 & \textbf{4.1908} & 6.1365 \\
\hhline{:=:=:=:=:}
\multirow{4}{*}{\shortstack[c]{EMD-2858 \\ (80S ribosome)}} & 0.125 & \textbf{11.2948} & 17.3292 \\
\hhline{|~|-|-|-|}
& 0.25 & \textbf{6.8850} & 13.3778 \\
\hhline{|~|-|-|-|}
& 0.5 & \textbf{2.3152} & 3.6690 \\
\hhline{|~|-|-|-|}
& 1 & 0.5680 & \textbf{0.3512} \\
\hhline{:=:=:=:=:}
\multirow{4}{*}{\shortstack[c]{EMD-4214 \\ (40S ribosome)}} & 0.125 & \textbf{15.5635} & 18.0612 \\
\hhline{|~|-|-|-|}
& 0.25 & \textbf{15.1112} & 18.0862 \\
\hhline{|~|-|-|-|}
& 0.5 & \textbf{14.0420} & 18.0069 \\
\hhline{|~|-|-|-|}
& 1 & \textbf{11.6964} & 15.7237 \\
\hhline{|-|-|-|-|}
\end{tabular}
\medskip
\caption{The average denoising error of recovered pure common lines matrices \newline from 30 simulated images of macromolecules at various SNR, 50 runs each. \newline Bold values indicate the algorithm with lower error.}
\label{table:denoising-table}
\end{table}

\subsection{Clustering heterogeneous image sets}{\label{sec:clustering-heterogeneous}}

We test the performance of our algorithm \textsc{Clusters} for clustering (see Section \ref{sec:heterogeneity-application}) on simulated and real data.

The success of clustering is measured by the adjusted Rand index \cite{hubert1985} (ARI) between the ground-truth clusters and the recovered clusters. The range of this index is $-\infty < \text{ARI} \leq 1$, with $\text{ARI} = 1$ if the two partitions are identical. The ARI is a corrected-for-chance version of the Rand index, meaning that it is the expected Rand index for the cluster and is equal to 0 if every element is placed in a random cluster.

While any number of common lines $|S|$ can be sampled on line 4 of \textsc{Clusters}, we found that sampling four common lines at a time was effective for a number of reasons: $|S| = 4$ enforces a non-trivial rank 3 constraint, and the small number of common lines allowed us to both generate many samples rapidly and improve the numerical stability of the \textsc{Sinkhorn} scaling procedure. In addition, the \textsc{CommunityDetection} algorithm we use is the one described in \cite{Lancichinetti_2009}.

\medskip

\subsubsection{Simulated data}

We generate a dataset containing three clusters, with $n = 5+30+15 = 50$ images from the 40S, 60S, and 80S ribosome respectively, from which we construct a common lines matrix.

\textsc{Clusters} achieved perfect clustering ($\text{ARI} = 1$) at $\text{SNR} = 10$, $\text{ARI} = 0.8581$ at $\text{SNR} = 5$, and $\text{ARI} = 0.3286$ at $\text{SNR} = 1$. The clusters found at $\text{SNR} = 5$ are displayed in Figure \ref{fig:clustering_simulated}, which shows that only one pair of images were placed in incorrect clusters.

\begin{figure}[!ht]
{\includegraphics[width=1\textwidth]{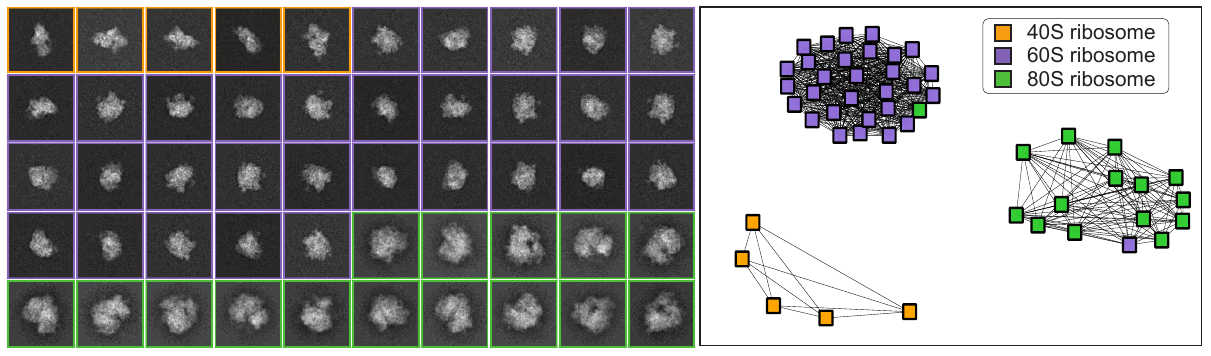}}
{\caption{Clustering results for $n=50$ simulated images with SNR = 5. Images are size $128 \times 128$ with pixel size of 3\r{A}, and are colored according to the ground truth labels. Using \textsc{Clusters} achieves \newline ARI = 0.8581 and only one pair of images are incorrectly clustered}
\label{fig:clustering_simulated}}
\end{figure}

\medskip

\subsubsection{Real data}
Our real data consists of a subset of 2D class averages computed from the experimental data described in Verbeke et. al.~\cite{verbeke_separating_2020}. The subset we consider consists of two clusters with $n = 47 + 28 = 75$ images corresponding to the 60S and 80S ribosomes respectively. Each class average is $96 \times 96$ with a pixel size of 4.4\r{A}. We use the labels from ~\cite{verbeke_separating_2020} as ground-truth for clustering.

Figure \ref{fig:clustering_real} shows the clusters found by our algorithm \textsc{Clusters}, achieving ARI = 0.8440 and misclassifying only three images.

The clustering algorithm used in \cite{verbeke_separating_2020} is based on performing community detection on a nearest-neighbours graph constructed using \textit{Euclidean distances} between the best-matching line projections between every pair of images. We stress that our clustering algorithm uses a completely distinct aspect of common lines data: the \textit{positions} of the common lines.  As a proof of concept for our constraints, we do not make use of the correlations between the common lines at all, unlike \cite{verbeke_separating_2020}.  
The test for \textsc{Clusters} only uses a subset of the dataset in \cite{verbeke_separating_2020}, which has $n = 100$ images and includes images with unknown labels. If we compare only the 60S and 80S images that were clustered, then \textsc{Clusters} achieves a similar performance, where one additional image is misclassified by \textsc{Clusters} compared to \cite{verbeke_separating_2020}.

\begin{figure}[!t]
\centering
{\includegraphics[width=0.85\textwidth]{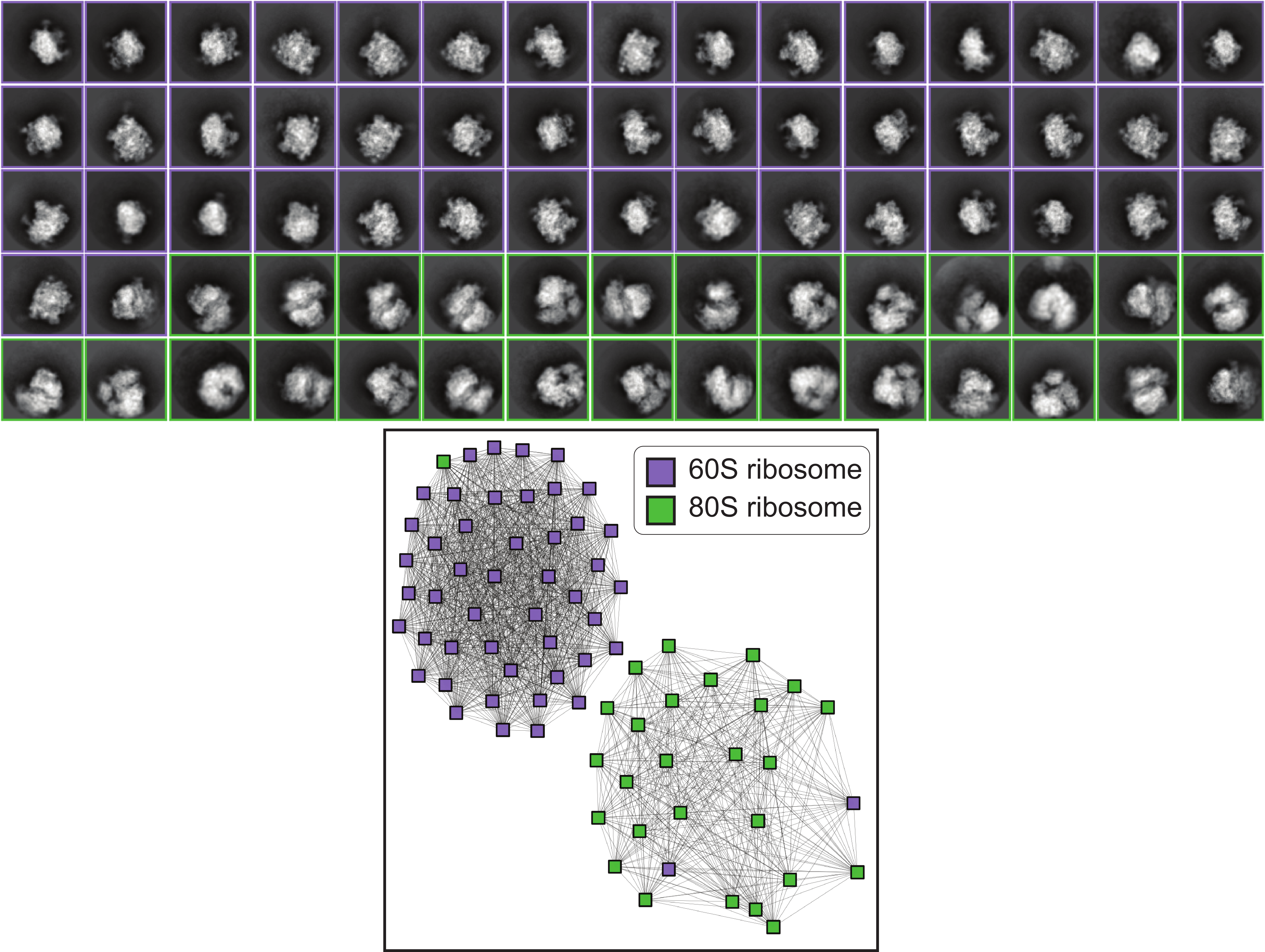}}
{\caption{Clustering results for $n=75$ 2D class averages from EMPIAR-10268 computed as described in ~\cite{verbeke_separating_2020}. Images are size $96 \times 96$ with a pixel size of 4.4\r{A}, and are colored according to the ground truth labels. Using \textsc{Clusters} achieves ARI = 0.8440 and only three images are incorrectly clustered}
\label{fig:clustering_real}}
\end{figure}

\section{Conclusion}{\label{sec:conclusion}}

This paper revisited the fundamental topic of common lines in cryo-EM image processing.
We discussed a novel approach for dealing with common lines, based on a certain $2n \times n$ matrix encoding the common lines between $n$ projection images.
We proved that if the $2 \times 1$ blocks of the matrix are properly scaled, then the matrix satisfies nice algebraic constraints: a low-rank condition and several sparse quadratic constraints.  
The new formulation operates directly on common lines data, and is fully global in that it does not require angular reconstitution or voting procedures at all.
It opens the door to different and potentially
 more robust approaches to computational tasks involving common lines.
Using the algebraic constraints, we adapted optimization algorithms from other domains to give new methods to denoise common lines data, and recover the 3D rotations underlying noisy images. Numerical experiments show that these methods have increased accuracy at low SNR, compared to existing methods based on common lines.
We also explored a setting where traditional common lines methods fail to apply -- cryo-EM datasets with discrete heterogeneity -- by proposing a sampling-based process to cluster the images homogeneous subcommunities based on our algebraic constraints.
Experiments with simulated and real data show the method performs well when applied to images with high noise.

Although there is clear promise, several future directions could be pursued for further improvements.  
Firstly, in \Cref{sec:algo} the optimization algorithm building on \cite{scalingAmit} is quite complex. Matrix scaling problems as in \cite{scalingAmit} and our work are an interesting variation on the problem of matrix completion; would other optimization approaches perform better?
Secondly, extensions to molecules with nontrivial point group symmetries would be useful (and currently are a focus in other common lines research). Perhaps our formulation can suggest another way to incorporate symmetries into common lines methods.
Lastly, in the application to discrete heterogeneity, we neglected correlation scores between the common lines, on which \cite{verbeke_separating_2020} relied.  It is likely better to use both the scores and the algebraic constraint errors. 

\medskip
\medskip

\begin{Backmatter}

\noindent \textbf{Funding Statement.} T.M. is supported by a Mathematical Institute Scholarship at the University of Oxford.
A.D. is supported in part by NSF DMS 1937215. 
E.V. is supported by the Simons Foundation Math+X Investigator award to Amit Singer.
J.K. is supported in part by NSF DMS 2309782, NSF CISE-IIS 2312746, and start-up grants from the College of Natural
Science and Oden Institute at the University of Texas at Austin.

\medskip
\medskip

\noindent\textbf{Competing Interests.} The authors declare no competing interests exist.

\medskip
\medskip

\noindent \textbf{Data Availability Statement.} Data and replication code are available at
\begin{equation}{\label{link:github}} 
\textup{\href{https://github.com/ozitommi/algebraic-common-lines}{https://github.com/ozitommi/algebraic-common-lines}}   
\end{equation}

\medskip
\medskip

\noindent \textbf{Author Contributions.} All authors conceived of the project, designed the algorithms and developed the mathematical theory.  T.M., A.D. and E.V. prepared the data. 
T.M. and A.D. wrote the software and performed the numerical experiments.  All authors wrote the manuscript and approved of its final submission.

\bibliography{main}
\bibliographystyle{achemso}

\end{Backmatter}

\clearpage
\markboth{APPENDIX}{APPENDIX}
\appendix
\numberwithin{equation}{section}

\section{Additional proofs}{\label{appendix:a}}

\begin{proof}[Proof of Proposition \ref{prop:length}]{\label{proof:prop-length}}
    Since orthogonal matrices preserve norms, we have
    \begin{align}{\label{eq:rji}}
        1 &= \|\mathbf{r}_3^{(i)}\|_2^2 = \left\|R^{(j)} \mathbf{r}_3^{(i)}\right\|_2^2 = \left\|\begin{pmatrix}
            1 & & \\
            & -1 & \\
            & & 1
        \end{pmatrix}\begin{pmatrix}
            & & 1 \\
            & 1 & \\
            1 & & 
        \end{pmatrix}\begin{pmatrix}
            {\mathbf{r}_1^{(j)}}^\top \\
            {\mathbf{r}_2^{(j)}}^\top \\
            {\mathbf{r}_3^{(j)}}^\top 
        \end{pmatrix} \mathbf{r}_3^{(i)}\right\|_2^2 = \left\|\begin{pmatrix}
            {\mathbf{r}_3^{(j)}}^\top \\
            -{\mathbf{r}_2^{(j)}}^\top \\
            {\mathbf{r}_1^{(j)}}^\top 
        \end{pmatrix} \mathbf{r}_3^{(i)}\right\|_2^2 \nonumber \\
        &= \langle \mathbf{r}_3^{(j)},  \mathbf{r}_3^{(i)} \rangle^2 + \left\|\begin{pmatrix}
            -\langle \mathbf{r}_2^{(j)}, \mathbf{r}_3^{(i)} \rangle \\
            \langle \mathbf{r}_1^{(j)}, \mathbf{r}_3^{(i)} \rangle
        \end{pmatrix} \right\|_2^2 = \langle \mathbf{r}_3^{(i)},  \mathbf{r}_3^{(j)} \rangle^2 + \left\|\mathbf{a}_{ji}\right\|_2^2
    \end{align}
    Similarly, we have
    \begin{equation}{\label{eq:rij}}
        1 = \langle \mathbf{r}_3^{(i)},  \mathbf{r}_3^{(j)} \rangle^2 + \left\|\mathbf{a}_{ij}\right\|_2^2
    \end{equation}
    Thus, equating \eqref{eq:rji} with \eqref{eq:rij} and simplifying, we obtain $\|\mathbf{a}_{ij}\|_2^2 = \|\mathbf{a}_{ji}\|_2^2$.
\end{proof}

\begin{proof}[Proof of Proposition \ref{prop:determinant}]
Let $D = \mathrm{det}\begin{pmatrix}
    \mathbf{r}_3^{(i)} & \mathbf{r}_3^{(j)} & \mathbf{r}_3^{(k)}
\end{pmatrix}$. Then for each $i$ we have
\begin{align}{\label{eq:ijik}}
    D &= \mathrm{det}\left(\begin{pmatrix}
        1 & & \\
        & -1 & \\
        & & 1 
    \end{pmatrix}\begin{pmatrix}
            & & 1 \\
            & 1 & \\
            1 & & 
        \end{pmatrix}\begin{pmatrix}
            {\mathbf{r}_1^{(i)}}^\top \\
            {\mathbf{r}_2^{(i)}}^\top \\
            {\mathbf{r}_3^{(i)}}^\top 
        \end{pmatrix}\begin{pmatrix}
    \mathbf{r}_3^{(i)} & \mathbf{r}_3^{(j)} & \mathbf{r}_3^{(k)}
\end{pmatrix}\right) = \mathrm{det}\left(\begin{pmatrix}
            {\mathbf{r}_3^{(i)}}^\top \\
            -{\mathbf{r}_2^{(i)}}^\top \\
            {\mathbf{r}_1^{(i)}}^\top 
        \end{pmatrix}\begin{pmatrix}
    \mathbf{r}_3^{(i)} & \mathbf{r}_3^{(j)} & \mathbf{r}_3^{(k)}
\end{pmatrix}\right) \nonumber \\
&= \mathrm{det}\begin{pmatrix}
    1 & \langle \mathbf{r}_3^{(i)},\mathbf{r}_3^{(j)} \rangle & \langle \mathbf{r}_3^{(i)},\mathbf{r}_3^{(k)} \rangle \\
    0 & -\langle \mathbf{r}_2^{(i)}, \mathbf{r}_3^{(j)} \rangle & -\langle \mathbf{r}_2^{(i)}, \mathbf{r}_3^{(k)} \rangle  \\
    0 & \langle \mathbf{r}_1^{(i)}, \mathbf{r}_3^{(j)} \rangle & \langle \mathbf{r}_1^{(i)}, \mathbf{r}_3^{(k)} \rangle
\end{pmatrix} = \mathrm{det}\begin{pmatrix}
    -\langle \mathbf{r}_2^{(i)}, \mathbf{r}_3^{(j)} \rangle & -\langle \mathbf{r}_2^{(i)}, \mathbf{r}_3^{(k)} \rangle \\
    \langle \mathbf{r}_1^{(i)}, \mathbf{r}_3^{(j)} \rangle & \langle \mathbf{r}_1^{(i)}, \mathbf{r}_3^{(k)} \rangle
\end{pmatrix} = \mathrm{det}\begin{pmatrix}
    \mathbf{a}_{ij} & \mathbf{a}_{ik},
\end{pmatrix}
\end{align}
by expanding the determinant along the first column. Similarly for each $j$ and $k$ we have
\begin{equation}{\label{eq:jijk}}
    D = \mathrm{det}\begin{pmatrix}
    \langle \mathbf{r}_3^{(j)},\mathbf{r}_3^{(i)} \rangle & 1 & \langle \mathbf{r}_3^{(j)},\mathbf{r}_3^{(k)} \rangle \\
    -\langle \mathbf{r}_2^{(j)}, \mathbf{r}_3^{(i)} \rangle & 0 & -\langle \mathbf{r}_2^{(j)}, \mathbf{r}_3^{(k)} \rangle  \\
    \langle \mathbf{r}_1^{(j)}, \mathbf{r}_3^{(i)} \rangle & 0 & \langle \mathbf{r}_1^{(j)}, \mathbf{r}_3^{(k)} \rangle
\end{pmatrix} = -\mathrm{det}\begin{pmatrix}
    -\langle \mathbf{r}_2^{(j)}, \mathbf{r}_3^{(i)} \rangle & -\langle \mathbf{r}_2^{(j)}, \mathbf{r}_3^{(k)} \rangle \\
    \langle \mathbf{r}_1^{(j)}, \mathbf{r}_3^{(i)} \rangle & \langle \mathbf{r}_1^{(j)}, \mathbf{r}_3^{(k)} \rangle
\end{pmatrix} = -\mathrm{det}\begin{pmatrix}
    \mathbf{a}_{ji} & \mathbf{a}_{jk}
\end{pmatrix}
\end{equation}
\begin{equation}{\label{eq:kikj}}
    D = \mathrm{det}\begin{pmatrix}
    \langle \mathbf{r}_3^{(k)},\mathbf{r}_3^{(i)} \rangle & \langle \mathbf{r}_3^{(k)},\mathbf{r}_3^{(j)} \rangle & 1 \\
    -\langle \mathbf{r}_2^{(k)}, \mathbf{r}_3^{(i)} \rangle & -\langle \mathbf{r}_2^{(k)}, \mathbf{r}_3^{(j)} \rangle & 0  \\
    \langle \mathbf{r}_1^{(k)}, \mathbf{r}_3^{(i)} \rangle & \langle \mathbf{r}_1^{(k)}, \mathbf{r}_3^{(j)} \rangle & 0
\end{pmatrix} = \mathrm{det}\begin{pmatrix}
    -\langle \mathbf{r}_2^{(k)}, \mathbf{r}_3^{(i)} \rangle & -\langle \mathbf{r}_2^{(k)}, \mathbf{r}_3^{(j)} \rangle \\
    \langle \mathbf{r}_1^{(k)}, \mathbf{r}_3^{(i)} \rangle & \langle \mathbf{r}_1^{(k)}, \mathbf{r}_3^{(j)} \rangle
\end{pmatrix} = \mathrm{det}\begin{pmatrix}
    \mathbf{a}_{ki} & \mathbf{a}_{kj}
\end{pmatrix}
\end{equation}
Thus, equating~\eqref{eq:ijik},~\eqref{eq:jijk}, and~\eqref{eq:kikj}, we obtain $\mathrm{det}\begin{pmatrix}
    \mathbf{a}_{ij} & \mathbf{a}_{ik}
\end{pmatrix} = -\mathrm{det} \begin{pmatrix}
    \mathbf{a}_{ji} & \mathbf{a}_{jk}
\end{pmatrix} = \mathrm{det}\begin{pmatrix}
    \mathbf{a}_{ki} & \mathbf{a}_{kj}
\end{pmatrix}$.
\end{proof}

\begin{proof}[Proof of Theorem \ref{thm:unique-rotations}]
We first prove the theorem in the case $n = 3$. Given $R_1,R_2,R_3 \in SO(3)$, we have $A = \psi(R_1,R_2,R_3) = \psi(R_1Q,R_2Q,R_3Q)$ for any $Q \in SO(3)$ since
\begin{align*}
     \begin{pmatrix} - {\mathbf{r}_2^{(1)}}^{\!\top} \\ {\mathbf{r}_1^{(1)}}^{\!\top} \\ 
\vdots \\  -{\mathbf{r}_2^{(3)}}^{\!\top} \\ {\mathbf{r}_1^{(3)}}^{\!\top}  \end{pmatrix}_{6 \times 3} \!\!\! \begin{pmatrix}  \mathbf{r}_3^{(1)} & \mathbf{r}_3^{(2)} & \mathbf{r}_3^{(3)} \end{pmatrix}_{3 \times n} = \begin{pmatrix} \begin{pmatrix}
        - {\mathbf{r}_2^{(1)}}^{\!\top} \\ {\mathbf{r}_1^{(1)}}^{\!\top}
    \end{pmatrix}Q \\ 
\vdots \\  \begin{pmatrix}
        - {\mathbf{r}_2^{(3)}}^{\!\top} \\ {\mathbf{r}_1^{(3)}}^{\!\top}
    \end{pmatrix}Q \\ \end{pmatrix}_{6 \times 3} \!\!\! \begin{pmatrix}  Q^\top \mathbf{r}_3^{(1)} & Q^\top \mathbf{r}_3^{(2)} & Q^\top \mathbf{r}_3^{(3)} \end{pmatrix}_{3 \times 3}
\end{align*}
Hence, the fibres of $\psi$ contain at least a copy of $SO(3)$. By fixing $Q = R_1^\top$, we may assume that $R_1 = I$, so that $A = \psi(R_1,R_2,R_3) = \psi(I,R_2R_1^\top,R_3R_1^\top)$. To prove the statement, we therefore need to show that fibers of the map $\psi$ with $R_1 = I$ fixed,
\begin{equation}
\begin{split}
\overline{\psi}: \operatorname{SO}(3) \times \operatorname{SO}(3) &\longrightarrow \mathbb{R}^{6 \times 3} \\
    (R^{(1)},R^{(2)}) &\mapsto \psi(I,R^{(1)},R^{(2)}),
\end{split}
\end{equation}
generically consist of only one point. Our strategy to show this is to set up a corresponding system of polynomial equations for a random instance of two rotations $R^{(1)}$ and $R^{(2)}$, and then solve the system numerically, showing that it has only one solution. 
It is sufficient to solve the polynomial system over the complex numbers $\mathbb{C}$, and exhibit that there is a unique solution even over $\mathbb{C}$.
We parameterize $R_1, R_2 \in \operatorname{SO}(3)$ using the Euler-Rodriguez formula:
\begin{equation*}
    R_i = \begin{pmatrix} a_i^2+b_i^2-c_i^2-d_i^2 & 2(b_ic_i-a_id_i) & 2(b_id_i + a_ic_i) \\
                                       2(b_ic_i+a_id_i) & a_i^2+c_i^2-b_i^2-d_i^2 & 2(c_id_i - a_ib_i) \\
                                       2(b_id_i-a_ic_i) & 2(c_id_i+a_ib_i) & a_i^2+d_i^2-b_i^2-c_i^2 \end{pmatrix}
\end{equation*}
where $a_i,b_i,c_i,d_i \in \mathbb{R}$ such that $a_i^2 + b_i^2 + c_i^2 + d_i^2 = 1$. We construct one pure common lines matrix $A = \psi(I,R_1,R_2)$ whose entries are polynomial in $a_i,b_i,c_i,d_i$, and another pure common lines matrix $\overline{A} = \psi(I,\overline{R_1},\overline{R_2})$, from random rotation matrices $\overline{R_1},\overline{R_2} \in \operatorname{SO}(3)$, whose entries are real. Setting $A = \overline{A}$ gives us a system of polynomial equations, which we solve over $\mathbb{C}$ using the package HomotopyContinuation.jl in Julia \cite{homotopyContinuation}. Every rotation under the Euler-Rodriguez parametrization has two parametrizations, namely $(a,b,c,d)$ and $(-a,-b,-c,-d)$ representing the same matrix in $\operatorname{SO}(3)$. 
By parameterizing $R_1$ and $R_2$, we therefore expect to find 4 real solutions to the system and therefore one point in the fiber, which is indeed what we find using homotopy continuation. The computation is carried out in the file {fiber.jl} in our Github repository \eqref{link:github}.

Now with the theorem true for $n = 3$, it in fact follows that the theorem is true for all $n > 3$: if $A_1 = \psi(R_1,\ldots,R_n)$ and $A_2 = \psi(S_1,\ldots,S_n)$, where $A_1 = A_2$ is a generic point in the image of $\psi$, then we need to show that there exists a unique $Q \in \operatorname{SO}(3)$ such that $S_i = R_iQ$ for all $i = 1,\ldots,n$. Choosing two $6 \times 3$ pure common lines submatrices of $A_1 = A_2$ corresponding to indices $\{i,j,k\}$ and $\{i,\ell,m\}$, the theorem in the case $n = 3$ implies that there exist $Q_{ijk},Q_{i\ell m} \in \operatorname{SO}(3)$ such that $(S_i,S_j,S_k) = (R_iQ_{ijk},R_jQ_{ijk},R_jQ_{ijk})$ and $(S_i,S_j,S_\ell) = (R_iQ_{i\ell m},R_\ell Q_{i\ell m},R_m Q_{i\ell m})$. In particular, $R_iQ_{ijk} = R_iQ_{i\ell ,}$, so $Q_{ijk} = Q_{i\ell m}$. Thus, any triplet of rotations are related by the same matrix $Q$.
\end{proof}

\begin{proof}[Proof of Proposition \ref{prop:suffice-locally}]
Let $\mathcal{V}_n \subseteq \mathbb{R}^{2n \times n}$ denote the cone over the common lines variety (i.e., $\mathcal{V}_n$ is the the smallest algebraic variety containing all scalar multiples of all pure common lines matrices).  
Let $\mathcal{W}_n \subseteq \mathbb{R}^{2n \times n}$ denote the variety defined by the \!\! polynomial constraints in \Cref{prop:suffice-locally}.  
Note that since the constraints are invariant to global scaling, we have $\mathcal{V}_n \subseteq \mathcal{W}_n$.
By \cite{bochnak2013real}, in order to show that $\mathcal{V}_n$ is an irreducible component of $\mathcal{W}_n$ we need to show that
\begin{equation}\label{eq:want1}
\dim(\mathcal{V}_n) = \dim(\mathcal{W}_n, A)
\end{equation}
holds a generic point $A \in \mathcal{V}_n$, where $\dim(\cdot)$ and $\dim(\cdot, \cdot)$ respectively denote the dimension of a variety and the local dimension of a variety at a point. This is analogous to computing the dimension of a manifold by computing the dimension of its tangent space at a point.

Recall $\mathcal{V}_n$ is the cone over the common lines variety, which is the smallest algebraic variety containing the image of the map $\psi$ in \eqref{phi-map}.  
Thus using the fiber dimension theorem (the algebraic geometric analog of the rank-nullity theorem from linear algebra), \Cref{thm:unique-rotations} implies 
\begin{equation}\label{eq:know1}
\dim(\mathcal{V}_n) = 3n-2.
\end{equation}
Next we  compute $\dim(\mathcal{W}_n, A)$. We first note that $\mathcal{W}_n \subseteq \mathcal{X}_n$, where $\mathcal{X}_n$ is the algebraic variety of all rank $\leq 3$ matrices in $\mathbb{R}^{2n \times n}$ defined by the vanishing of $4 \times 4$ minors.
Note the map $\rho : \mathbb{R}^{2n \times 3} \times \mathbb{R}^{n \times 3} \rightarrow \mathcal{V}_n$ given by $\rho(B,C) = B C^{\top}$ parameterizes $\mathcal{V}_n$ with $9$-dimensional fibers over each  rank-$3$ point \cite{levin2023finding}. This holds because for each $3 \times 3$ invertible matrix $M$, $\rho(B,C) = \rho(BM^{-1},CM^\top)$.  Hence 
\begin{equation} \label{eq:for-M2}
\dim(\mathcal{W}_n, A) + 9 = \dim(\rho^{-1}(\mathcal{W}_n), (B,C))
\end{equation}
for $(B,C) \in \mathbb{R}^{2n \times 3} \times \mathbb{R}^{n \times n}$ such that $\rho(B,C) = A$.  
We now compute the right-hand side of \eqref{eq:for-M2} in the computer algebra system Macaulay2 \cite{M2}. Specifically, we differentiate the defining constraints of $\mathcal{W}_n$ with respect to $B$ and $C$, noting that the rank-3 constraint is enforced automatically by $\rho$ and the other constraints in \Cref{prop:suffice-locally} are biquadratic and bilinear in $(B,C)$.  We evaluate the Jacobian matrix at a point $(B,C)$ in the fiber of $\rho$ over $A$, where $A \in \mathcal{V}_n$ is generated as a random scaling of $\psi(R^{(1)}, \ldots, R^{(n)})$ where $R^{(i)}$ are random.
Generically, the nullity of the Jacobian matrix equals $\dim(\rho^{-1}(\mathcal{W}_n), (B,C))$.  
Performing numerical computations for $n=3, \ldots, 50$ in double-precision arithmetic yields numerical nullities of $3n+7$. 
Comparing \eqref{eq:for-M2} with \eqref{eq:know1} implies \eqref{eq:want1} as desired.  
\end{proof}

\begin{proof}[Proof of Theorem \ref{thm:row-column-scales}]
Since nonzero row and column scaling preserves rank, we may assume without loss of generality that $\lambda_{i1} = \lambda_{1j} = 1$ for all $i,j = 2,\ldots,n$: this is achieved by choosing $\mu_1 = \tau_1 = 1$, $\mu_i = \lambda_{i1}^{-1}$, and $\tau_j = \lambda_{1j}^{-1}$. We will show that $\lambda_{ij} = \lambda_{k\ell}$ for all $i,j,k,\ell = 2,\ldots,n$, $i \neq j$, $k \neq \ell$. {In other words, we will prove that the matrix $\Lambda$ must have the form in \eqref{eq:Lambda-matrix}.}

First we prove that $\lambda_{ij} = \lambda_{ik}$ for all $i,j,k = 2,\ldots,n$, $i \neq j$, $i \neq k$. There are three cases to consider: $i < j < k$, $j < i < k$, and $j < k < i$. Suppose we are in the first case. Since $\text{rank}(B) = 3$, every $4 \times 4$ minor of $B$ vanishes. Thus, choosing row indices 1, 2, $2i-1$, and $2i$, and column indices 1, $i$, $j$, and $k$, we have
\begin{align}{\label{eq:expand-det}}
    \text{det}\begin{pmatrix}
        \mathbf{0} & \mathbf{a}_{1i} & \mathbf{a}_{1j} & \mathbf{a}_{1k} \\
        \mathbf{a}_{i1} & \mathbf{0} & \lambda_{ij}\mathbf{a}_{ij} & \lambda_{ik}\mathbf{a}_{ik}
    \end{pmatrix} = \lambda_{ik}\text{det}\begin{pmatrix}
        \mathbf{a}_{1i} & \mathbf{a}_{1j}
    \end{pmatrix}\text{det}\begin{pmatrix}
        \mathbf{a}_{i1} & \mathbf{a}_{ik}
    \end{pmatrix} - \lambda_{ij}\text{det}\begin{pmatrix}
        \mathbf{a}_{1i} & \mathbf{a}_{1k}
    \end{pmatrix}\text{det}\begin{pmatrix}
        \mathbf{a}_{i1} & \mathbf{a}_{ij}
    \end{pmatrix} = 0
\end{align}
But since $A$ is a pure common lines matrix, $\text{det}\begin{pmatrix}
        \mathbf{a}_{1i} & \mathbf{a}_{1j}
    \end{pmatrix} = -\text{det}\begin{pmatrix}
        \mathbf{a}_{i1} & \mathbf{a}_{ij}
    \end{pmatrix}$ and \text{det}$\begin{pmatrix}
        \mathbf{a}_{i1} & \mathbf{a}_{ik}
    \end{pmatrix} = -\text{det}\begin{pmatrix}
        \mathbf{a}_{1i} & \mathbf{a}_{1k}
    \end{pmatrix}$ by Proposition \ref{prop:determinant}, all of which are nonzero since $A$ is generic. Thus $\lambda_{ij} = \lambda_{ik}$.

Expanding the $4 \times 4$ minor with the same choice of row and column indices for the remaining two cases gives us
\begin{align*}
    \text{det}\begin{pmatrix}
        \mathbf{0} & \mathbf{a}_{1j} & \mathbf{a}_{1i} & \mathbf{a}_{1k} \\
        \mathbf{a}_{i1} & \lambda_{ij}\mathbf{a}_{ij} & \mathbf{0} & \lambda_{ik}\mathbf{a}_{ik}
    \end{pmatrix} = -\text{det}\begin{pmatrix}
        \mathbf{0} & \mathbf{a}_{1i} & \mathbf{a}_{1j} & \mathbf{a}_{1k} \\
        \mathbf{a}_{i1} & \mathbf{0} & \lambda_{ij}\mathbf{a}_{ij} & \lambda_{ik}\mathbf{a}_{ik}
    \end{pmatrix} = 0
\end{align*}
and
\begin{align*}
    \text{det}\begin{pmatrix}
        \mathbf{0} & \mathbf{a}_{1j} & \mathbf{a}_{1k} & \mathbf{a}_{1i} \\
        \mathbf{a}_{i1} & \lambda_{ij}\mathbf{a}_{ij}  & \lambda_{ik}\mathbf{a}_{ik} & \mathbf{0}
    \end{pmatrix} = \text{det}\begin{pmatrix}
        \mathbf{0} & \mathbf{a}_{1i} & \mathbf{a}_{1j} & \mathbf{a}_{1k} \\
        \mathbf{a}_{i1} & \mathbf{0} & \lambda_{ij}\mathbf{a}_{ij} & \lambda_{ik}\mathbf{a}_{ik}
    \end{pmatrix} = 0
\end{align*}
respectively, which brings us back to \eqref{eq:expand-det}.

Suppose that $3 \leq i \leq n-1$. The $4 \times 4$ minor whose row indices are $2i - 1$, $2i$, $2(i+1) - 1$, and $2(i+1)$, and whose column indices are 1, $i-1$, $i$, and $i+1$ is
\begin{align*}
    \text{det}\begin{pmatrix}
        \mathbf{a}_{i1} & \lambda_{i,i-1}\mathbf{a}_{i,i-1} & \mathbf{0} & \lambda_{i,i+1}\mathbf{a}_{i,i+1} \\
        \mathbf{a}_{i+1,1} & \lambda_{i+1,i-1}\mathbf{a}_{i+1,i-1} & \lambda_{i+1,i}\mathbf{a}_{i+1,i} & \mathbf{0}
    \end{pmatrix} = \text{det}\begin{pmatrix}
        \mathbf{0} & \lambda_{i,i+1}\mathbf{a}_{i,i+1} & \mathbf{a}_{i1} & \lambda_{i,i-1}\mathbf{a}_{i,i-1} \\
        \lambda_{i+1,i}\mathbf{a}_{i+1,i} & \mathbf{0} & \mathbf{a}_{i+1,1} & \lambda_{i+1,i-1}\mathbf{a}_{i+1,i-1} 
    \end{pmatrix}
\end{align*}
We showed earlier that $\lambda^\prime := \lambda_{i,i+1} = \lambda_{i,i-1}$ and $\lambda^{\prime\prime} := \lambda_{i+1,i} = \lambda_{i+1,i-1}$. Thus expanding, we obtain
\begin{equation}{\label{eq:expand-det-2}}
    \begin{aligned}
    \text{det}\begin{pmatrix}
        \mathbf{0} & \lambda^\prime\mathbf{a}_{i,i+1} & \mathbf{a}_{i1} & \lambda^\prime\mathbf{a}_{i,i-1} \\
        \lambda^{\prime\prime}\mathbf{a}_{i+1,i} & \mathbf{0} & \mathbf{a}_{i+1,1} & \lambda^{\prime\prime}\mathbf{a}_{i+1,i-1} 
    \end{pmatrix} &= \lambda^\prime{\lambda^{\prime\prime}}^2\text{det}\begin{pmatrix}
        \mathbf{a}_{i1} & \mathbf{a}_{i,i+1}
    \end{pmatrix}\text{det}\begin{pmatrix}
        \mathbf{a}_{i+1,i-1} & \mathbf{a}_{i+1,i}
    \end{pmatrix} \\
    &- {\lambda^\prime}^2\lambda^{\prime\prime}\text{det}\begin{pmatrix}
        \mathbf{a}_{i,i-1} & \mathbf{a}_{i,i+1}
    \end{pmatrix}\text{det}\begin{pmatrix}
        \mathbf{a}_{i+1,1} & \mathbf{a}_{i+1,i}
    \end{pmatrix} = 0
    \end{aligned}
\end{equation}
After dividing both sides of the last equation by $\lambda^\prime \lambda^{\prime\prime}$, we obtain $\lambda^\prime = \lambda^{\prime\prime}$. Lastly, the $4 \times 4$ minor with the same choice of row and column indices when $i = 2$ is
\begin{align*}
    \text{det}\begin{pmatrix}
        \mathbf{a}_{21} & \mathbf{0} & \lambda_{23}\mathbf{a}_{23} & \lambda_{24}\mathbf{a}_{24} \\
        \mathbf{a}_{31} & \lambda_{32}\mathbf{a}_{32} & \mathbf{0} & \lambda_{34}\mathbf{a}_{34} 
    \end{pmatrix} = \text{det}\begin{pmatrix}
        \mathbf{0} & \lambda^\prime\mathbf{a}_{23} & \mathbf{a}_{21} & \lambda^\prime\mathbf{a}_{24} \\
        \lambda^{\prime\prime}\mathbf{a}_{32} & \mathbf{0} & \mathbf{a}_{31} & \lambda^{\prime\prime}\mathbf{a}_{34} 
    \end{pmatrix} = 0
\end{align*}
since $\lambda^\prime := \lambda_{23} = \lambda_{24}$ and $\lambda^{\prime\prime} := \lambda_{32} = \lambda_{34}$, which brings us back to \eqref{eq:expand-det-2}. Thus we have proven that $\lambda_{ij} = \lambda_{k\ell}$ for all $i,j,k,\ell = 2,\ldots,n$, $i \neq j$, $k \neq \ell$. Let $\lambda$ be their common value. Then
\begin{equation}{\label{eq:Lambda-matrix}}
    \Lambda = \begin{pmatrix}
0 & 1 & 1 & \ldots & 1 & 1  \\
1 & 0 & \lambda & \ldots & \lambda & \lambda \\
1 & \lambda & 0 & \ldots & \lambda & \lambda \\
\vdots & \vdots &  \vdots & \ddots & \vdots & \vdots \\
1 & \lambda & \lambda & \ldots & 0 & \lambda  \\
1 & \lambda & \lambda & \ldots & \lambda & 0
\end{pmatrix}
\end{equation}
and we may obtain \eqref{eq:row-column-scale} by choosing {$\mu_1 = \frac{1}{\lambda}$, $\mu_2 = \ldots = \mu_n = 1$ and $\tau_1 = 1$, $\tau_2 = \ldots = \tau_n = \lambda$.}

Now we prove the last statement of the theorem. What we have shown so far is that $\text{rank}(B) = 3$ implies that there exists scales $\mu_i$ and $\tau_j$ such that $\lambda_{ij} = \mu_i\tau_j$. {Since $\lambda_{ij} = \lambda_{ji}$, we have
$$\mu_i\tau_j = \mu_j\tau_i$$
for all for all $i,j = 1,\ldots,n$. If $\boldsymbol{\mu} = (\mu_i)_{i=1}^n$ and $\boldsymbol{\tau} = (\tau_j)_{j = 1}^n$, then this means that the matrix $\boldsymbol{\mu}\boldsymbol{\tau}^\top \in \mathbb{R}^{n \times n}$ is symmetric. This is true if and only if $\boldsymbol{\mu} = \boldsymbol{\tau}$. In particular,
\begin{equation}{\label{eq:mu-sq=tau-sq}}
    \mu_i = \tau_i
\end{equation}
If $B$ furthermore satisfies the determinant constraints, then we have
\begin{equation*}
    \mu_i^2\tau_j\tau_k\text{det}\begin{pmatrix}
        \mathbf{a}_{ij} & \mathbf{a}_{ik}
    \end{pmatrix} = -\mu_j^2\tau_i\tau_k\text{det}\begin{pmatrix}
        \mathbf{a}_{ji} & \mathbf{a}_{jk}
    \end{pmatrix} = \mu_k^2\tau_i\tau_j\text{det}\begin{pmatrix}
        \mathbf{a}_{ki} & \mathbf{a}_{kj}
    \end{pmatrix}
\end{equation*}
for all $1 \leq i < j < k \leq n$, which implies that
\begin{equation*}
    \mu_i^2\tau_j\tau_k = \mu_j^2\tau_i\tau_k = \mu_k^2\tau_i\tau_j
\end{equation*}
Substituting \eqref{eq:mu-sq=tau-sq}, we obtain
\begin{equation*}
    \tau_i^2\tau_j\tau_k = \tau_j^2\tau_i\tau_k = \tau_k^2\tau_i\tau_j \implies \tau_i = \tau_j = \tau_k
\end{equation*}
after dividing by $\tau_i\tau_j\tau_k$. Denoting the common value of the above equation on the right by $\tau$, we find that $\mu_i = \tau$. Hence, $\lambda_{ij} = \tau$ for all $i,j = 1,\ldots,n$, $i \neq j$, which gives the desired result.}
\end{proof}

\section{Least squares problems}{\label{appendix:b}}

In Section \ref{sec:sinkhorn}, we consider the least squares problems \eqref{sinkhorn-d1} and \eqref{sinkhorn-d2}, which are
\begin{equation*}
    \boldsymbol{\mu} = \argmin_{\|\boldsymbol{\mu}\|_2=1} \|\text{diag}(\boldsymbol{\mu})M - (\text{diag}(\boldsymbol{\mu})M)^\top\|_F^2
\end{equation*}
\begin{equation*}
    \boldsymbol{\tau} = \argmin_{\|\boldsymbol{\tau}\|_2=1} \|M\text{diag}(\boldsymbol{\tau}) - (M\text{diag}(\boldsymbol{\tau}))^\top\|_F^2
\end{equation*}
These have solutions
\begin{equation*}
    \min_{\|\boldsymbol{\mu}\|_2=1} \left\|N_L \cdot \boldsymbol{\mu}\right\|_2^2
\end{equation*}
\begin{equation*}
    \min_{\|\boldsymbol{\tau}\|_2=1} \left\|N_R \cdot \boldsymbol{\tau}\right\|_2^2
\end{equation*}
respectively, where $N_L \in \mathbb{R}^{n \times n}$ with
\begin{equation}{\label{eq:Nl-solution}}
    (N_L)_{ij} = \begin{cases}
    \displaystyle \sum_{\substack{k = 1,...,n \\ k \neq i}} M_{ik}^2 &\quad \text{ if } i = j \\
        -M_{ij}M_{ji} &\quad \text{ if } i \neq j
    \end{cases}
\end{equation}
and $N_R \in \mathbb{R}^{n \times n}$ with
\begin{equation}{\label{eq:Nr-solution}}
    (N_R)_{ij} = \begin{cases}
    \displaystyle \sum_{\substack{k = 1,...,n \\ k \neq i}} M_{ki}^2 &\quad \text{ if } i = j \\
        -M_{ij}M_{ji} &\quad \text{ if } i \neq j
    \end{cases}
\end{equation}

We also consider the least squares problems \eqref{determinant-scales-LS-1} and \eqref{determinant-scales-LS-2}, which are
\begin{align}{\label{determinant-scales-LS-1-appendix}}
 \min_{\|\boldsymbol{\mu}\|_2=1} \|(\boldsymbol{\mu} \;\triangle\; \mathbf{v}_1) - (\boldsymbol{\mu} \;\triangle\; \mathbf{v}_2)\|_2^2 + \|(\boldsymbol{\mu} 
 \;\triangle\; \mathbf{v}_2) - (\boldsymbol{\mu} \;\triangle\; \mathbf{v}_3)\|_2^2
\end{align}
\begin{align}{\label{determinant-scales-LS-2-appendix}}
 \min_{\|\boldsymbol{\tau}\|_2=1} \quad \|(\boldsymbol{\tau} \;\triangle_1\; \mathbf{v}_1) - (\boldsymbol{\tau} \;\triangle_1\; \mathbf{v}_2)\|_2^2 + \|(\boldsymbol{\tau} \;\triangle_2\; \mathbf{v}_2) - (\boldsymbol{\tau} \;\triangle_2\; \mathbf{v}_3)\|_2^2
\end{align}
where
\begin{equation}{\label{eq:mu-triangle}}
\begin{aligned}
\boldsymbol{\mu}\;\triangle\;\mathbf{v}_1 &:= \begin{pmatrix}
\mu_i\text{sgn}(\text{det}\begin{pmatrix}
        \mathbf{a}_{ij} & \mathbf{a}_{ik}
    \end{pmatrix})\sqrt{|\text{det}\begin{pmatrix}
        \mathbf{a}_{ij} & \mathbf{a}_{ik}
    \end{pmatrix}|}
\end{pmatrix}_{1 \leq i < j < k \leq n}\\
\vspace{1em}\\
\boldsymbol{\mu}\;\triangle\;\mathbf{v}_2 &:= \begin{pmatrix}
\mu_j\text{sgn}(\text{det}\begin{pmatrix}
        \mathbf{a}_{ji} & \mathbf{a}_{jk}
    \end{pmatrix})\sqrt{|\text{det}\begin{pmatrix}
        \mathbf{a}_{ji} & \mathbf{a}_{jk}
    \end{pmatrix}|}   
\end{pmatrix}_{1 \leq i < j < k \leq n}\\
\vspace{1em}\\
\hspace{-2em}\boldsymbol{\mu}\;\triangle\;\mathbf{v}_3 &:= \begin{pmatrix}
\mu_k\text{sgn}(\text{det}\begin{pmatrix}
        \mathbf{a}_{ki} & \mathbf{a}_{kj}
    \end{pmatrix})\sqrt{|\text{det}\begin{pmatrix}
        \mathbf{a}_{ki} & \mathbf{a}_{kj}
    \end{pmatrix}|}  
\end{pmatrix}_{1 \leq i < j < k \leq n}
\end{aligned}
\end{equation}
and 
\begin{equation}{\label{eq:tau-triangle}}
\begin{aligned}
\boldsymbol{\tau} \;\triangle_1\; \mathbf{v}_1 &:= \begin{pmatrix}\tau_j\text{det}\begin{pmatrix}
        \mathbf{a}_{ij} & \mathbf{a}_{ik}
    \end{pmatrix}
\end{pmatrix}_{1 \leq i < j < k \leq n}
&
\boldsymbol{\tau} \;\triangle_1\; \mathbf{v}_2 &:= \begin{pmatrix}
-\tau_i\text{det}\begin{pmatrix}
        \mathbf{a}_{ji} & \mathbf{a}_{jk}
    \end{pmatrix}
\end{pmatrix}_{1 \leq i < j < k \leq n} \\ \vspace{5em} & \\
\boldsymbol{\tau} \;\triangle_2\; \mathbf{v}_2 &:= \begin{pmatrix}
-\tau_k\text{det}\begin{pmatrix}
        \mathbf{a}_{ji} & \mathbf{a}_{jk}
    \end{pmatrix}
\end{pmatrix}_{1 \leq i < j < k \leq n}
&
\boldsymbol{\tau} \;\triangle_2\; \mathbf{v}_3 &:= \;\;\;\begin{pmatrix}
\tau_j\text{det}\begin{pmatrix}
        \mathbf{a}_{ki} & \mathbf{a}_{kj}
    \end{pmatrix}
\end{pmatrix}_{1 \leq i < j < k \leq n}
\end{aligned}
\end{equation}
are all vectors of length ${n \choose 3}$. The solution to problem \eqref{determinant-scales-LS-1-appendix} is 
\begin{equation*}
    \min_{\|\boldsymbol{\mu}\|_2=1} \left\|(D_{L,1} + D_{L,2}) \cdot \boldsymbol{\mu}\right\|_2^2
\end{equation*}
where $D_{L,1}, D_{L,2} \in \mathbb{R}^{n \times n}$ with
\begin{equation}{\label{eq:DL1-solution}}
    (D_{L,1})_{pq} = \left\{\begin{aligned}
    \displaystyle & &\sum_{\substack{1 \leq j < k \leq n \\ j \neq p, k > p}} & \text{sgn}(v_{pj,pk})|v_{pj,pk}| &\quad \text{ if } p = q \\
        \displaystyle &- &\sum_{\max\{p,q\} < k \leq n} &  \text{sgn}(v_{pq,pk}v_{qp,qk})\sqrt{|v_{pq,pk}|}\sqrt{|v_{qp,qk}|} &\quad \text{ if } p \neq q
    \end{aligned}\right.
\end{equation}
\begin{equation}{\label{eq:DL2-solution}}
    (D_{L,2})_{pq} = \left\{\begin{aligned}
    \displaystyle & &\sum_{\substack{1 \leq j < k \leq n \\ j \neq p, k > p}} & \text{sgn}(v_{pk,pj})|v_{pj,pk}| &\quad \text{ if } p = q \\
        \displaystyle &- &\sum_{\max\{p,q\} < k \leq n} &  \text{sgn}(v_{pq,pk}v_{qp,qk})\sqrt{|v_{pq,pk}|}\sqrt{|v_{qp,qk}|} &\quad \text{ if } p \neq q
    \end{aligned}\right.
\end{equation}

\medskip

\noindent and the solution to problem \eqref{determinant-scales-LS-2-appendix} is
\begin{equation*}
    \min_{\|\boldsymbol{\tau}\|_2=1} \left\|(D_{R,1} + D_{R,2}) \cdot \boldsymbol{\tau}\right\|_2^2
\end{equation*}
where $D_{R,1}, D_{R,2} \in \mathbb{R}^{n \times n}$ with
\begin{equation}{\label{eq:DR1-solution}}
    (D_{R,1})_{pq} = \left\{\begin{aligned}
    \displaystyle & &\sum_{\substack{1 \leq j < k \leq n \\ j \neq p, k > p}} & v_{jp,jk}^2 &\quad \text{ if } p = q \\
        \displaystyle &- &\sum_{\max\{p,q\} < k \leq n} &  v_{pq,pk}v_{qp,qk} &\quad \text{ if } p \neq q
    \end{aligned}\right.
\end{equation}
\begin{equation}{\label{eq:DR2-solution}}
    (D_{R,2})_{pq} = \left\{\begin{aligned}
    \displaystyle & &\sum_{\substack{1 \leq k < j \leq n \\ j \neq p, k < p}} & v_{jk,jp}^2 &\quad \text{ if } p = q \\
        \displaystyle &- &\sum_{1 \leq k < \max\{p,q\}} &  v_{pk,pq}v_{qk,qp} &\quad \text{ if } p \neq q
    \end{aligned}\right.
\end{equation}

\noindent where $v_{ij,ik} := \text{det}\begin{pmatrix}
        \mathbf{a}_{ij} & \mathbf{a}_{ik}
    \end{pmatrix}$.
    
\end{document}